\newcommand{\R}{{\mathbb R}}
\newcommand{\N}{{\mathbb N}} 
\newcommand{\T}{{\mathbb T}} 
\newcommand{\C}{{\mathbb C}}
\newcommand{\Lin}{\mathcal{L}}
\newcommand{\Hin}{\mathcal{H}}
\newcommand{\Win}{\mathcal{W}}
\newcommand{\m}{{\mathit m}}
\newcommand{\je}{{\mathit j}}
\newcommand{\Q}{{\mathcal Q}}
\renewcommand{\Re }{\mathrm {Re}}
\renewcommand{\Im }{\mathrm {Im}}
 \renewcommand{\geq }{\geqslant}
 \renewcommand{\leq }{\leqslant}
\newcommand{\round}[1]{{\left ( #1 \right )}}
\newcommand{\squad}[1]{{\left [ #1 \right ]}}
\newcommand{\norm}[2]{{\left\| #1 \right\|}_{#2}}
\DeclarePairedDelimiter{\abs}{\lvert}{\rvert}
\DeclarePairedDelimiter{\norma}{\lVert}{\rVert} 
\newcommand{\Rn}{{\mathbb R^n}}
\newenvironment{sistema}%
{\left\lbrace\begin{array}{@{}l@{}}}%
{\end{array}\right.}  
\newcommand{\supp}{{\rm supp}\,}
\numberwithin{equation}{section}
\newtheorem{theorem}{Theorem}[section]
\newtheorem{corollary}[theorem]{Corollary}
\newtheorem{lemma}[theorem]{Lemma}
\newtheorem{proposition}[theorem]{Proposition}
\theoremstyle{definition} 
\newtheorem{definition}[theorem]{Definition}
\newtheorem{remark}[theorem]{Remark}
\begin{document} 

\title[$H^1$-scattering for SCH-systems]{Decay and Scattering in energy space for the solution of weakly coupled Schr\"odinger-Choquard and  Hartree-Fock equations}


\author{M.~Tarulli}
\address{Mirko Tarulli: Dipartimento di Matematica, Universit$\grave{\text{a}}$ Degli Studi di Pisa
Largo Bruno Pontecorvo 5 I - 56127 Pisa. Italy.
Faculty of Applied Mathematics and Informatics, Technical University of Sofia, Kliment Ohridski Blvd. 8, 1000 Sofia, and IMI BAS, Acad. Georgi Bonchev Str., Block 8, 1113 Sofia, Bulgaria}
\email{mta@tu-sofia.bg}

\author{G.~Venkov}
\address{George Venkov: Faculty of Applied Mathematics and Informatics, Technical University of Sofia, Kliment Ohridski Blvd. 8, 1000 Sofia, Bulgaria}
\email{gvenkov@tu-sofia.bg}

\subjclass[2010]{35J10, 35Q55, 35P25.}
\keywords{Nonlinear Schr\"odinger systems, Choquard equation, Hartree-Fock equations, scattering theory, weakly coupled equations}


\begin{abstract}
We prove decay with respect to some Lebesgue norms  for a class of Schr\"odinger equations with non-local nonlinearities by showing new Morawetz inequalities and estimates. As a byproduct, we obtain large-data scattering in the energy space for the solutions to the systems of $N$ defocusing Schr\"odinger-Choquard equations with mass-energy intercritical nonlinearities in any space dimension and of defocusing Hartree-Fock equations, for any dimension $d\geq3$. 
\end{abstract}
\maketitle

\section{Introduction}\label{sec:introduction}

The primary target of the paper is the study of the decaying and scattering properties of the solution to the following system of $N\geq 1$ nonlinear evolution equations in dimension $d\geq 1 $:
\begin{equation}\label{eq:nls}
\begin{cases}
i\partial_t \psi_j + \Delta \psi_j -
\sum_{k=1}^NG(\psi_j , \psi_k)=0,  \\ 
(\psi_j(0,\cdot))_{j=1}^N= (\psi_{j,0})_{j=1}^N \in H^1(\R^d)^N,
\end{cases}
\end{equation}
characterized by the nonlinearities 
\begin{align}\label{eq.nonlst}
G(\psi_j, \psi_k)=
\lambda_{jk} \squad{|x|^{-(d-\gamma_1)}*| \psi_k|^p} | \psi_j|^{p-2}  \psi_j\\
+
\beta\round {\squad{|x|^{-(d-\gamma_2)}*| \psi_k|^2  }\psi_j-\squad{|x|^{-(d-\gamma_2)}*\bar \psi_k \psi_j  } \psi_k}.
\nonumber
\end{align}
Here, for all $j,k=1,\dots,N$, $\psi_j=\psi_j(t,x):\R\times\R^d\to\C$, $(\psi_j)_{j=1}^N=(\psi_1,\dots, \psi_N)$ and  $\beta, \lambda_{jk}\geq0$
 are coupling parameters such that $\lambda_{jj}\neq 0$, $\beta=0$ if $p>2$.
 Henceforth, we name \eqref{eq:nls} as Schr\"odinger-Choquard (SCH) if $p\geq2$ and Hartree-Fock (HF) if $\lambda_{jk}=0$ for all $j,k=1,\dots, N, \, N\geq 2$ in \eqref{eq.nonlst}.
 We will require that the nonlinearity parameters $p, \gamma_1$ satisfy the following relations
\begin{align}
\label{eq:base}
0<\gamma_1<d,& \ \
 2\leq p<p^*(d), \ \  p^*(d)=
\begin{cases}
\infty \,  & \text{if} \ \  d=1,2, \\
\frac {d+\gamma_1}{d-2}  \,    &\text{if} \ \ d\geq3,
\end{cases}
\\
&p> p_{*}(d), \ \  p_{*}(d)=\frac{d+\gamma_1+2}d,
\label{eq:baseII}
\end{align}
that is the  $L^2$-supercritical and $H^1$-subcritical regime. We shall assume also that $\max{\round{0, d-4}}<\gamma_2<d$ if $\beta\neq 0$. The system \eqref{eq:nls} enjoys two important conserved quantities: we have the mass
\begin{align}\label{eq.mass}
  M(\psi_j)(t)=\int_{\R^d}|\psi_j(t)|^2\,dx,
\end{align}
 for any $j=1,\dots,N$ and the energy,
 \begin{equation}\label{eq:energy}
\begin{split}
  E(\psi_{1},\dots,\psi_{N})
   =  \sum_{j=1}^N\int_{\R^d}  \abs{\nabla{\psi_j}}^2
 + \frac{1}{2p}\sum_{j,k=1 }^N\lambda_{jk} \int({|x|^{-(n-\gamma_1)}}\ast
|\psi_k|^{p})|\psi_j|^{p} dx
\\
+\frac {\beta}2\sum_{j,k=1 }^N \int_{\R^d}\round {\squad{|x|^{-(d-\gamma_2)}*| \psi_k|^2  }|\psi_j|^2-\squad{|x|^{-(d-\gamma_2)}*\bar \psi_k \psi_j  } \psi_k \bar \psi_j}dx.
\end{split}
   \end{equation}

The equation \eqref{eq:nls} has a strong physical meaning and its role is important in many models of mathematical physics.
In fact, the special case of the Hartree-Newton equation, that is when $d=3$, $p = 2$, $N=1$, $\gamma_1 = 2$ and  $\beta=0$ in \eqref{eq.nonlst}, was variously introduced in the scenario of quantum mechanics in order to represent the mean-field limit of large systems of bosons (the so-called Bose-Einstein condensates) by considering the self-interactions of the such charged particles. We suggest, in this direction, \cite{ES}, \cite{Len}, \cite{LR} and the references therein. 
About the HF equation, that is the case when $d=3$, $N\geq 2$ and $\lambda_{jk}=0,\, j,k=1\dots, N$ in \eqref{eq.nonlst}, it was applied in \cite{Fo} for certain approximations in the theory of one component, for portraying an exchange term resulting from Pauli's principle as well as for describing the fermions as an approximation of the equation overlooking the impact of their fermionic nature. Other relevant papers about this topic are \cite{BJPSS} and \cite{BSS2} (see also the references inside). Furthermore, in \cite{FrLe} the Hartree-Fock equation was fundamental for developing models of white dwarfs. Turning to the SCH equation, the case of  $d=3$, $\beta=0$, $p, \gamma_1$ as in \eqref{eq:base} and $N=1$ in \eqref{eq:nls} was introduced to sketch an electron trapped in its own hole, as showed in \cite{ChS} and \cite{CSV} and very recently in \cite{Pen}, to describe self-gravitating matter together with quantum entanglement and quantum information effects. Morivated by this and by \cite{CLM}, where the general case of systems of interacting particles is studied, we carry on with the analysis of the decay properties of the solution to  \eqref{eq:nls} unfolding large-data scattering in $H^1(\R^d)^N$ for the Schr\"odinger-Choquard and the Hartree-Fock systems on $N$ particles. By pursuing the ideas initially introduced in \cite{CT} for systems of Nonlinear Schr\"odinger equations with local nonlinearities (see also \cite{TzVi}, \cite{Vis} for the single NLS and \cite{Ta} for the fourth-order NLS), we introduce relevant breakthroughs extending the theory to the non-local setting. Namely, the system \eqref{eq:nls} is translation invariant so we can set up either the Morawetz viriel and action, or their bilinear analogues. As a main outcome, we are able to present new Morawetz identities, interaction Morawetz identities and their associated inequalities for \eqref{eq:nls}. The succeeding step is to localize the Morawetz inequalities on space-time slabs having $\R^d$-cubes as space components, utilizing again the translation invariance of the equation and of all the estimates involved. We say, that at level of localized frame, the dichotomy between local and non-local interactions breaks down: the convolution functions appearing in the interaction Morawetz can be handled in the same manner as if we are treating pure power nonlinearities. The corresponding localized estimates accomplish a contradiction argument which implies the decay of $L^r$-norms of the solutions $(\psi_j(t,x))_{j=1}^N$, provided that $2<r\leq2d/(d-2)$, for $d\geq 3$ and, if $p>2$, $2<r<\infty$ for $d=1,2$, with $r=\infty$ included for $d=1$. Let us underline that our approach guarantees the possibility to deal with the SCH in low
spatial dimension $d=1,2$, bypassing the techniques of \cite{Nakanishi1}.
Now, this peculiar behaviour, jointly with a suitable reformulation of the theory developed in \cite{Ca}, bears to the asymptotic completeness and existence of the wave operators in the energy space $H^1(\R^d)^N$ for solution to \eqref{eq:nls}. We point out now the novelties introduced in our paper. Looking at the Schr\"odinger-Hartree equation (that is, SCH with $p=2$) and at the HF systems in dimension $d\geq3$, one knows that the aforementioned decay and the consequent scattering are similarly achieved in several papers like \cite{GiOz}, \cite{GiVel3}, \cite{NaOz}, \cite{Wa}, where the pseudo-conformal technique was successfully applied once one assumes the initial data laying in a weighted energy space. We improve all these results by selecting the initial data
 in $H^1(\R^d)^N$ only, showing a similar decay of the solution to \eqref{eq:nls} in the range $\max{\round{0, d-4}}< \gamma_1,\gamma_2< d$. We refine also the decay property of the solutions and simplify some of the results released in \cite{GiVel2} and \cite{GiVel}, where the scattering in the energy space for the  Schr\"odinger-Hartree equation is acquired, for $\max{\round{0, d-4}}< \gamma_1< d-2$, without imposing further regularity to the initial data. Let us move to the case of the defocusing SCH given by \eqref{eq:nls} with $d\geq 1$, $N\geq 1$, $p>2$ in \eqref{eq.nonlst}. We earn in this setting the full decay of the solution of the system \eqref{eq:nls}, the existence of the scattering states and that the wave operators are well-defined and bijective in the energy-space $H^1(\R^d)$. Moreover, all such properties are transposed to the special case of $N=1$, that is 
\begin{equation}\label{eq.Hart}
  \begin{sistema}
i \partial_t \psi + \Delta \psi =\lambda ({|x|^{-(n-\gamma_1)}}\ast
|\psi|^{p})|\psi|^{p - 2} \psi,\\
\psi(0,x)=\psi_0(x),
  \end{sistema}
\end{equation}
with $\lambda>0$. Currently, we are unaware of alike results, so we emphasize that ours are new in the whole literature. This explains the reason why we can not supply any kind of references.
\\ 

The first main target of this paper is the following.

\begin{theorem}\label{decay}
 Let $(\psi_{j})_{j=1}^N \in\mathcal C(\R, H^1(\R^d)^N)$ be the unique global solution to \eqref{eq:nls} with $p>2,$ and $\gamma_1$ in \eqref{eq.nonlst} such that \eqref{eq:base} holds. Then, for all $j=1,\dots,N$, one has the decay property
\begin{align}\label{eq:decay1}
\lim_{t\rightarrow \pm \infty} \|\psi_j(t)\|_{L^r(\R^d)}=0,
\end{align}
with $2<r\leq 2d/(d-2),$ for $d\geq 3$, with $2<r<+\infty,$ for $d=2$ and with $2<r\leq+\infty,$ for $d=1$.
Let $d\geq 3$, if $(\psi_{j})_{j=1}^N \in\mathcal C(\R, H^1(\R^d)^N)$ is the unique global solution to \eqref{eq:nls} with $p=2$ and $\max{\round{0, d-4}}<\gamma_1,\gamma_2<d$ in \eqref{eq.nonlst}, then \eqref{eq:decay1}  remains valid along with $2<r\leq 2d/(d-2).$
\end{theorem}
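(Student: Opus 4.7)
My plan is to follow the route sketched in the introduction: derive Morawetz and interaction (bilinear) Morawetz identities for the full system \eqref{eq:nls}, localize them on space-time slabs with $\R^d$-cubes as space components, and close a contradiction argument that forces the decay \eqref{eq:decay1}.

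First I would establish the standard Morawetz identity with weight $a(x)=|x|$ by computing $\frac{d}{dt}\sum_j\int 2\,\Im(\bar\psi_j\nabla\psi_j)\cdot\nabla a\,dx$ and inserting \eqref{eq:nls}. The kinetic block yields the nonnegative $-\Delta^2 a$ term plus the pointwise $\delta_0$-contribution in $d\geq 3$. Each Choquard block contributes, after an integration by parts transferring a derivative onto the convolution kernel, a nonnegative weighted quantity of the type $\sum_{j,k}\lambda_{jk}\big[\,|x|^{-(d-\gamma_1+1)}\ast|\psi_k|^p\big]|\psi_j|^p$. The Hartree-Fock exchange block (the $\beta$-terms) is handled by summing over $j,k$: the direct and exchange pieces reorganize into a quadratic form, nonnegative by the positivity of the Fourier transform of $|x|^{-(d-\gamma_2)}$ on the range $\max(0,d-4)<\gamma_2<d$. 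Passing to the bilinear/interaction version with weight $|\psi_k(t,y)|^2 a(x-y)$, the translation invariance (stressed in the introduction) produces a global space-time bound
\begin{equation*}
\sum_{j,k}\lambda_{jk}\int_{\R}\!\!\iint \frac{|\psi_j(t,x)|^p|\psi_k(t,y)|^p}{|x-y|^{d-\gamma_1+1}}\,dx\,dy\,dt\le C(E_0,M_0),
\end{equation*}
together with its $\gamma_2$-analogue for the Hartree-Fock terms when $p=2$.

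Second, I would localize in space on a grid of disjoint cubes $\{Q_\alpha\}$ of fixed edge $R$. Since the kernel $|x-y|^{-(d-\gamma_1+1)}$ is locally integrable (as $d-\gamma_1+1<d$ in our regime) and decays at infinity, the non-local double integral is controlled by the diagonal plus a convergent near-neighbor sum, so that at the localized level the convolution behaves exactly as a pure power nonlinearity, yielding a uniform estimate of the shape
\begin{equation*}
\sup_\alpha \int_{\R}\|\psi_j(t)\|_{L^{2p}(Q_\alpha)}^{2p}\,dt\le C_R
\end{equation*}
for each $j$. This is the key moment at which the dichotomy local/non-local collapses, as noted by the authors.

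Third, I would close by contradiction on the endpoint exponent $r=2p$. If \eqref{eq:decay1} failed, there would exist $\eps>0$, a sequence $t_n\to+\infty$ and translations $x_n\in\R^d$ such that $\|\psi_j(t_n,\cdot+x_n)\|_{L^{2p}(Q)}\ge\eps$ on a fixed unit cube $Q$; using the uniform $H^1$-bound coming from the conservation of \eqref{eq.mass}--\eqref{eq:energy} and the local well-posedness in $H^1$, one propagates this lower bound on a time-interval $[t_n-\tau,t_n+\tau]$ with $\tau=\tau(\eps,E_0,M_0)>0$. Translation invariance and summing over $\alpha$ turn these intervals into an unbounded contribution to the localized bound above, a contradiction. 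Once decay at $r=2p$ is in hand, Hölder/Sobolev interpolation against the bounded $L^2$-mass and bounded $\dot H^1$-seminorm yields \eqref{eq:decay1} for the full range of $r$ in both the $p>2$ and the $p=2$, $d\geq 3$ cases.

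The principal obstacle I foresee is the second step: making the cube-localization truly rigorous for a non-local nonlinearity, especially in low dimensions $d=1,2$ where the radial weight $|x|$ degenerates and no local smoothing is available off the shelf. The payoff, however, is exactly the ability to run the approach of \cite{CT} in the Choquard/Hartree-Fock setting and to cover $d=1,2$ without the machinery of \cite{Nakanishi1}; the limitation $\max(0,d-4)<\gamma_1,\gamma_2<d$ for the $p=2$ case should show up precisely as the range that makes both the diagonal integrability and the positivity of the exchange quadratic form compatible.
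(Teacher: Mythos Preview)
Your overall architecture is right, but two of the key structural pieces are off and would not close as written.

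\textbf{The interaction Morawetz term for $p>2$.} With $a^\star(x,y)=|x-y|$ the nonlinear positive block that emerges is not a double integral with kernel $|x-y|^{-(d-\gamma_1+1)}$; it is a \emph{triple} integral. The useful positive piece is $N^{C}_{(p,a^\star)}$, carrying the factor $(p-2)/p$ and equal to
\[
\sum_{j,k,\ell}\widetilde\lambda_{jk}\int_{\R^{3d}}\frac{d-1}{|x-y|}\,\frac{|\psi_j(x)|^p|\psi_\ell(y)|^2|\psi_k(z)|^p}{|x-z|^{d-\gamma_1}}\,dxdydz
\]
for $d\geq 2$ (and a $\delta_{x=y}$ collapse in $d=1$). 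The ``remainder'' term $R^{C}_{(p,a^\star)}$, where a derivative lands on the convolution kernel, is the one that needs the bilinear symmetrisation $(x-z)\cdot\big(\tfrac{x-y}{|x-y|}-\tfrac{z-y}{|z-y|}\big)\geq 0$ to be discarded with the correct sign; it does not produce the $|x-y|^{-(d-\gamma_1+1)}$ quantity you wrote. As a consequence, your localization cannot give $\sup_\alpha\int_\R\|\psi_j\|_{L^{2p}(Q_\alpha)}^{2p}\,dt$: even formally, a kernel lower bound on $Q\times Q$ only yields $\int_\R\big(\int_Q|\psi_j|^p\big)^2\,dt$, which is an $L^p$-on-cubes quantity, not $L^{2p}$. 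The paper localizes the triple integral directly (both kernels are bounded below on $\Q^d_{\tilde x}(r)^3$) and runs the contradiction through an $L^2$ lower bound on cubes obtained from a \emph{localized Gagliardo--Nirenberg inequality} at the exponent $r=\frac{2d+8}{d+2}$, then propagates the $L^2$-on-cubes lower bound in time via the Morawetz identity with a smooth cut-off $a=\varphi(\cdot-x_n)$ (not via local well-posedness). That chain is what finally contradicts the triple-integral bound.

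\textbf{The $p=2$ case.} Here the prefactor $(p-2)/p$ kills $N^{C}_{(2,a^\star)}$ entirely, so the nonlinear Choquard term gives you nothing positive to integrate. The exchange block is indeed nonnegative (by Cauchy--Schwarz on $\eta(x,z)=\sum_j\psi_j(x)\bar\psi_j(z)$, not by a Fourier positivity argument), but that only lets you \emph{drop} it. The positive, coercive quantity you must keep is the \emph{linear} one coming from $-\Delta_x^2 a^\star$: for $a^\star=|x-y|$ one has $-\Delta^2_x|x-y|=(d-1)(d-3)|x-y|^{-3}$ (a Dirac in $d=3$), and this yields the localized bounds $\sum_j\int_\R\|\psi_j(t)\|_{L^4(Q)}^4\,dt<\infty$ ($d=3$) or $\sum_{j,\ell}\int_\R\|\psi_j\|_{L^2(Q)}^2\|\psi_\ell\|_{L^2(Q)}^2\,dt<\infty$ ($d\geq 4$), against which the same $L^2$-on-cubes contradiction runs. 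Your proposal does not isolate this linear term, so the $p=2$ branch would not close.
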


The second main result concerns the scattering of the solution in the energy space. 

\begin{theorem}\label{thm:mainNLCH}
  Assume $d\geq 1$ and $p>2$, $\gamma_1$  such that \eqref{eq:base}, \eqref{eq:baseII} hold or $d\geq3$ and $p=2$, $\max{\round{0, d-4}}<\gamma_1,\gamma_2<d-2,$ in \eqref{eq.nonlst}. Let $(\psi_{j})_{j=1}^N \in\mathcal C(\R, H^1(\R^d)^N)$ be the unique global solution to \eqref{eq:nls}, then:
  \begin{itemize}
  \item\emph{(asymptotic completeness)}  
    There exists $(\psi_{j,0}^{\pm})_{j=1}^N \in H^1(\R^d)^N$ such that for all $j=1,\dots,N$ 
    \begin{equation}\label{eq:scattering}
      \lim_{t\to\pm\infty}\left\|\psi_j(t,\cdot)-e^{it\Delta}\psi_{j,0}^\pm(\cdot)\right\|_{H^1(\R^d)}=0.
    \end{equation} 
  \item\emph{(existence of wave operators)} For every $(\psi_{j,0}^{\pm})_{j=1}^N \in H^1(\R^d)^N$ there exists unique 
    initial data $(\psi_{j,0})_{j=1}^N \in H^1(\R^d)^N,$ such that the
    global solution to \eqref{eq:nls} 
    $(\psi_{j})_{j=1}^N \in\mathcal C(\R, H^1(\R^d)^N)$  satisfies
    \eqref{eq:scattering}.
  \end{itemize}
\end{theorem}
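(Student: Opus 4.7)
The plan is to derive both statements from the pointwise-in-time decay proved in Theorem \ref{decay} together with Strichartz estimates, following the Cazenave-type scheme adapted to the non-local nonlinearity. Write Duhamel's formula
\[
\psi_j(t) = e^{it\Delta}\psi_{j,0} - i\int_0^{t} e^{i(t-s)\Delta} F_j(\psi)(s)\, ds, \qquad F_j(\psi) = \sum_{k=1}^N G(\psi_j,\psi_k),
\]
so that the natural candidates for the scattering states are
\[
\psi_{j,0}^{\pm} := \psi_{j,0} - i\int_0^{\pm\infty} e^{-is\Delta} F_j(\psi)(s)\, ds.
\]
By the dual Strichartz estimate, \eqref{eq:scattering} reduces to showing that the dual Strichartz norms of $F_j(\psi)$ and $\nabla F_j(\psi)$ on $(T,\infty)\times\R^d$ tend to $0$ as $T\to+\infty$; thus the whole issue boils down to upgrading the $L^r$-decay into a global space-time Strichartz bound.

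The core step is to pick an $H^1$-admissible Strichartz pair $(q,r)$ with $r$ in the range furnished by Theorem \ref{decay}, distribute the Riesz potentials in \eqref{eq.nonlst} via the Hardy-Littlewood-Sobolev inequality and H\"older, and absorb every factor not carrying the small $L^r$-norm through the Sobolev embedding $H^1\hookrightarrow L^s$ together with conservation of mass and energy. The outcome is an estimate of the schematic form
\[
\|F_j(\psi)\|_{L^{q'}_t L^{r'}_x(I)} + \|\nabla F_j(\psi)\|_{L^{q'}_t L^{r'}_x(I)} \leq C\bigl(\|\psi\|_{L^\infty_t H^1}\bigr)\, \eta^{\alpha}\, \bigl(\|\psi\|_{L^q_t L^r_x(I)} + \|\nabla\psi\|_{L^q_t L^r_x(I)}\bigr)^{\beta},
\]
valid on any interval $I\subset\{|t|>T_0\}$ on which $\|\psi_j(t)\|_{L^r}<\eta$, with $\alpha>0$. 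Feeding this back into Strichartz and bootstrapping across a finite concatenation of such intervals delivers the global Strichartz control $\|\psi_j\|_{L^q_t L^r_x(\R)} + \|\nabla\psi_j\|_{L^q_t L^r_x(\R)} < \infty$.

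This global bound immediately forces the $L^{q'}_t L^{r'}_x$-tails of $F_j(\psi)$ and $\nabla F_j(\psi)$ to vanish as $T\to\infty$, yielding asymptotic completeness. For the existence of wave operators, given prescribed $(\psi_{j,0}^+)_{j=1}^N\in H^1(\R^d)^N$, the Strichartz decay $\|e^{it\Delta}\psi_{j,0}^+\|_{L^q_t L^r_x([T,\infty))}\to 0$ allows a contraction argument for $\Phi(u)(t) = e^{it\Delta}\psi_{j,0}^+ + i\int_t^{+\infty} e^{i(t-s)\Delta} F_j(u)(s)\, ds$ in a small ball of the Strichartz space over $[T,\infty)$ with $T$ large; the resulting solution, asymptotic to $e^{it\Delta}\psi_{j,0}^+$, is continued to $t=0$ by the $H^1$-global well-posedness of \eqref{eq:nls} to provide the initial datum $(\psi_{j,0})_{j=1}^N$.

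The main obstacle is the calibration of the HLS and H\"older exponents so that the nonlinear estimate really closes: the convolutions with $|x|^{-(d-\gamma_1)}$ or $|x|^{-(d-\gamma_2)}$ must land in a Lebesgue space matching an $H^1$-admissible Strichartz pair, while still leaving at least one factor that carries the small $L^r$-decay. In the Choquard regime $p>2$ the flexibility provided by \eqref{eq:baseII} makes this algebra close in every dimension $d\geq 1$. In the Hartree-Fock case $p=2$, the stricter condition $\max(0,d-4)<\gamma_1,\gamma_2<d-2$ is precisely what aligns the HLS exponents with the $H^1$-Strichartz scaling, and the Fock exchange term is handled by the very same HLS scheme as the direct term thanks to their identical convolution structure.
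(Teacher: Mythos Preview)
Your proposal is correct and follows essentially the same route as the paper. The paper likewise upgrades the decay of Theorem~\ref{decay} to a global Strichartz bound via an HLS/H\"older estimate of exactly the schematic form you describe---with the explicit admissible pair $(q_1,r_1)=\bigl(\tfrac{4p}{dp-d-\gamma_1},\tfrac{2dp}{d+\gamma_1}\bigr)$ for $p>2$ (and its $p=2$ analogue), yielding $\|u\|_{L^{q_1}_{t>T}\Win^{1,r_1}_x}\lesssim \|u_0\|_{\Hin^1_x}+\|u\|_{L^\infty_{t>T}\Lin^{r_1}_x}^{(2p-2)(1-\theta_1)}\|u\|_{L^{q_1}_{t>T}\Win^{1,r_1}_x}^{(2p-2)\theta_1+1}$---closes by a continuity argument for $T$ large, and defers the wave operators to the standard Cazenave contraction you outline.
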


\begin{remark}
We observe that \eqref{eq:base} and \eqref{eq:baseII} force to some restrictions on $\gamma_1$ above. As long as $p>2$, we need only that $p^*(d)>2$, for $d\geq 3$. This is equivalent to the condition $\gamma_1>\max\round{0, d-4}$. In dimensions $d=1,2$ the fact that $p_{*}(d)>2$ and $p^*(d)=\infty$, grants the full range $0<\gamma_1<d$. Unlike above, if $p=2$  then one has to require $p_{*}(d)<2<p^*(d)$. This compels to the conditions $\max{\round{0, d-4}}<\gamma_1,\gamma_2<d-2,$ which are mandatory for the well-posedness and the asymptotic completness.

\end{remark}

Then, Theorem \ref{thm:mainNLCH} leads directly to other consequences. First we have the immediate one for the
SCH equation:

\begin{corollary}\label{thm:mainNLCH2a}
  Let $d\geq 1$ and $p, \gamma_1$ as in \eqref{eq:base}. Then, if $\psi_0 \in H^1(\R^d)$, the unique global solution  $\psi\in\mathcal C(\R, H^1(\R^d))$ to \eqref{eq.Hart} is such that:
  \begin{itemize}
  \item if $p>2$, the decay property
    \begin{align}\label{eq:decayC}
\lim_{t\rightarrow \pm \infty} \|\psi(t, \cdot)\|_{L^r(\R^d)}=0,
\end{align}
is verified for $2<r\leq2d/(d-2),$ $d\geq 3$, for $2<r<+\infty,$ $d=2$ and for  $2<r\leq+\infty,$ $d=1$;
    \item if $p>\max\round{2, p_{*}(d)}$,
  the scattering occurs, i.e. there exists $\psi_0^{\pm} \in H^1(\R^d)$ such that  
    \begin{equation}\label{eq:scatteringSC}
      \lim_{t\to\pm\infty}\left\|\psi(t,\cdot)-e^{-it\Delta}\psi_{0}^\pm(\cdot)\right\|_{H^1(\R^d)}=0.
    \end{equation}
    \end{itemize}
\end{corollary}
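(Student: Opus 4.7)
The plan is to obtain Corollary \ref{thm:mainNLCH2a} as an immediate specialization of Theorem \ref{decay} and Theorem \ref{thm:mainNLCH} to the one-particle case $N=1$. First I would reduce the system \eqref{eq:nls}--\eqref{eq.nonlst} by setting $N=1$, $\beta=0$, and $\lambda_{11}=\lambda>0$; then the nonlinear term collapses to $\lambda\,[\,|x|^{-(d-\gamma_1)}\ast|\psi|^{p}\,]\,|\psi|^{p-2}\psi$, and the system becomes exactly \eqref{eq.Hart}. In particular the conserved mass \eqref{eq.mass} and energy \eqref{eq:energy} reduce to the standard invariants of \eqref{eq.Hart}, so the unique global solution $\psi\in\mathcal C(\R,H^1(\R^d))$ postulated in the corollary is precisely the object to which the parent theorems apply.

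For the decay claim, I would observe that assuming $p>2$ together with \eqref{eq:base} matches verbatim the first clause of Theorem \ref{decay} (with $N=1$); no assumption on $\gamma_2$ is needed because $\beta=0$ eliminates the exchange term from \eqref{eq.nonlst}. Applying the theorem yields \eqref{eq:decay1} for $j=1$, which is exactly \eqref{eq:decayC} in the stated range of $r$.

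For the scattering claim, the extra hypothesis $p>\max(2,p_{*}(d))$ combined with \eqref{eq:base} gives both \eqref{eq:base} and \eqref{eq:baseII}, so Theorem \ref{thm:mainNLCH} applies in its $p>2$ branch (again with $\gamma_2$ irrelevant since $\beta=0$). Its asymptotic completeness statement \eqref{eq:scattering} specializes to \eqref{eq:scatteringSC} with $\psi_{0}^{\pm}\in H^1(\R^d)$.

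Since the argument is purely a verification that the hypotheses of the parent theorems are met, there is no genuine analytical obstacle; the only points to be cautious about are (i) that the restrictions $0<\gamma_1<d$ together with the thresholds $p_{*}(d)$ and $p^{*}(d)$ translate correctly to the dimensional cases $d=1,2$ and $d\ge 3$ listed in the corollary, and (ii) that the sign convention in the free propagator is kept consistent between \eqref{eq:scattering} and \eqref{eq:scatteringSC}, both referring to the free evolution solving $i\partial_t u+\Delta u=0$.
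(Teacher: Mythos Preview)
Your proposal is correct and matches the paper's own treatment: the corollary is stated immediately after Theorem \ref{thm:mainNLCH} as a direct consequence (``Then, Theorem \ref{thm:mainNLCH} leads directly to other consequences''), with no separate proof given, precisely because it is the specialization $N=1$, $\beta=0$, $\lambda_{11}=\lambda>0$ of Theorems \ref{decay} and \ref{thm:mainNLCH}. Your caution about the sign of the propagator in \eqref{eq:scatteringSC} versus \eqref{eq:scattering} is well placed---this is a typographical inconsistency in the paper rather than a substantive issue.
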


In the Schr\"odinger-Hartree and HF systems framework we have:

\begin{corollary}\label{thm:mainNLCH2b}
  Let $d\geq 3$, $p=2$ and $\max{\round{0, d-4}}<\gamma_1,\gamma_2<d-2$ in \eqref{eq.nonlst}.
  Then, if $(\psi_{j,0}^{\pm})_{j=1}^N \in H^1(\R^d)^N$, the unique global solution $(\psi_{j})_{j=1}^N \in\mathcal C(\R, H^1(\R^d)^N)$ to \eqref{eq:nls} is such that:
  \begin{itemize}
  \item the decay property
    \begin{align}\label{eq:decayHF}
\lim_{t\rightarrow \pm \infty} \|\psi_j(t, \cdot)\|_{L^r(\R^d)}=0,
\end{align}
is fulfilled for $2<r\leq2d/(d-2)$;
    \item the scattering occurs, i.e. there exists $(\psi_{j,0}^{\pm})_{j=1}^N \in H^1(\R^d)^N$ such that  
    \begin{equation}\label{eq:scattering0}
      \lim_{t\to\pm\infty}\left\|\psi_j(t,\cdot)-e^{it\Delta}\psi_{j,0}^\pm(\cdot)\right\|_{H^1(\R^d)}=0.
    \end{equation}
    \end{itemize}
\end{corollary}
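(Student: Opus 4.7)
The plan is to recognize that Corollary \ref{thm:mainNLCH2b} is a direct specialization of Theorem \ref{decay} and Theorem \ref{thm:mainNLCH} to the endpoint case $p=2$, so the proof amounts to checking that the hypotheses of those two theorems are met under the conditions stated on $d$, $\gamma_1$, and $\gamma_2$, and then quoting them.

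First I would verify the admissibility ranges. With $p=2$, the constraints \eqref{eq:base}--\eqref{eq:baseII} collapse to the two-sided inequality $p_{*}(d) < 2 < p^{*}(d)$, where $p_{*}(d) = (d+\gamma_1+2)/d$ and $p^{*}(d) = (d+\gamma_1)/(d-2)$ for $d \geq 3$. Solving these two inequalities in $\gamma_1$ yields exactly $\max\round{0, d-4} < \gamma_1 < d-2$, which matches the hypothesis. The identical bound $\max\round{0,d-4} < \gamma_2 < d-2$ on the exchange parameter is what is required to keep the Fock term at $H^1$-subcritical growth and within the range in which the interaction Morawetz machinery behind Theorems \ref{decay}--\ref{thm:mainNLCH} applies. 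This is also consistent with the Remark following Theorem \ref{thm:mainNLCH}.

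Second, once the admissibility is confirmed, I would invoke the $p=2$ branch of Theorem \ref{decay} to obtain the decay \eqref{eq:decayHF} on the full range $2 < r \leq 2d/(d-2)$, and then the $p=2$ branch of Theorem \ref{thm:mainNLCH} to deduce both the asymptotic completeness \eqref{eq:scattering0} and the existence of wave operators in $H^1(\R^d)^N$. No additional argument is needed at this level: the decay and scattering statements of the corollary are literally the specialization of those theorems to $N$-particle Hartree--Fock and Schr\"odinger--Hartree systems.

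The only point demanding attention, and likely the main obstacle in a fully written-out proof, is the underlying global well-posedness in $\mathcal{C}(\R, H^1(\R^d)^N)$ for the $p=2$ system in the specified parameter range, which is tacit in the hypotheses of both Theorem \ref{decay} and Theorem \ref{thm:mainNLCH}. This follows from the standard combination of Strichartz estimates for the free Schr\"odinger group, the Hardy--Littlewood--Sobolev inequality used to control both the Hartree convolution $|x|^{-(d-\gamma_1)}\ast|\psi_k|^2$ and the exchange kernel $|x|^{-(d-\gamma_2)}\ast\bar\psi_k\psi_j$ within the $H^1$-subcritical window $\gamma_1,\gamma_2 > d-4$, conservation of the energy \eqref{eq:energy}, and the defocusing signs $\lambda_{jk},\beta \geq 0$ preventing blow-up. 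Once well-posedness is taken as input, the corollary reduces to quoting Theorems \ref{decay} and \ref{thm:mainNLCH}.
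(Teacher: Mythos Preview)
Your proposal is correct and mirrors the paper's own treatment: the corollary is presented there as an immediate specialization of Theorem~\ref{decay} (for the decay~\eqref{eq:decayHF}, handled in the $p=2,\,d\geq3$ branch of Section~\ref{MainThm1}) and Theorem~\ref{thm:mainNLCH} (for the scattering~\eqref{eq:scattering0}, in Section~\ref{NLSscat}), with the global well-posedness input supplied by Proposition~\ref{ConsLaw}. The admissibility check you carry out is exactly the content of the Remark following Theorem~\ref{thm:mainNLCH}.
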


\begin{remark}
The foregoing corollary summarizes different results. In the case $p=2$ and $\beta=0$, we get  
\eqref{eq:decayHF} displaced for the system of $N\geq 1$ coupled Schr\"odinger-Hartree equations and if  $N\geq 2$ and $\lambda_{jk}=0$ for all $ j,k=1\dots, N$, we have the same decay property for the solution of the HF equation. 
Once \eqref{eq:decayHF} is proved, we can construct the scattering operators in the energy space.

\end{remark}

The literature related to these subjects is not so wide and according to our knowledge, Morawetz and interaction Morawetz estimates were available for systems of NLS for the first time in \cite{CT} and successively in \cite{Ta}. We come to an end by itemizing briefly some other achievements, different from the already cited ones, which regard particular versions of \eqref{eq:nls}.
The well-posedness for the Schr\"odinger-Hartree equation, both local and global, was examined in \cite{Ca}, \cite{Lieb} and \cite{MXZ2} while the existence of the standing waves was discussed in \cite{Li}. The scattering in the focusing critical case was examined in \cite{MXZ} and the blow up of the solutions in the focusing framework, in \cite{MXZ3} (we suggest the references contained therein also). As we said, little is known for the single SCH. On the other hand we cite here  \cite{FeYU} for the well-posedness for the single SCH and \cite{Li}
for the well-posedness and blow-up in the case of SCH perturbed by an inverse square potential. We remind  \cite{GV}, in which local and global well-posedness, existence of standing waves and blow up solutions were investigated for \eqref{eq.Hart} with $\gamma_1=2$ for $p=(d+4)/d$ in the focusing case $\lambda<0$. We mention also \cite{BAGS}, \cite{GS} and \cite{MV}, for more general informations about the solitary waves solution of the focusing \eqref{eq.Hart}. In closing, we recall that scattering for the focusing SCH in $d\geq 3$, was earned in \cite{Ar} and in \cite{ArRo} for large radial data and small data, respectively.

\subsection*{Outline of paper.} 
After some preliminaries in Section \ref{Prelim}, through the Section \ref{InterMor} we build, in Lemma \ref{le:morV} and Lemma \ref{le:intmorNL}, the Morawetz inequalities and their bilinear counterpart, respectively. The principal target of the Section \ref{MainThm1} is to unveil the decay of some Lebesgue norms of the solutions to the systems \eqref{eq:nls}, which is a fundamental property for catching the scattering states and is included in Proposition \ref{decay}. Finally, all the remaining scattering theory associated to \eqref{eq:nls} takes place in Section \ref{NLSscat}. The last section is the Appendix \ref{appen}, in which a localized Gagliardo-Nirenberg inequality, an ancillary tool used extensively beside the paper, is obtained.

\section{Preliminaries}\label{Prelim}
We indicate by $L_x^r$ the Lebesgue space $L^r(\R^d)$, and by $W^{1,r}_x$ and $H^1_x$ the  inhomogeneous Sobolev spaces $W^{1,r}(\Rn)$ and $H^1(\Rn)$, respectively (for more details see \cite{Ad}).
For any $N\in \N $, we also define $ \Lin_x^r= L^r(\R^d)^N$ and introduce 
the Sobolev spaces $\Win^{1,r}_x=W^{1,r}(\R^d)^N$ and $\Hin^1_x=H^1(\R^d)^N.$
From now on and in the sequel we adopt the following notations: for any two positive real numbers $a, b,$ we write $a\lesssim b$  (resp. $a\gtrsim b$) to denote $a\leq C b$ (resp. $Ca\geq b$), with $C>0,$ we unfold the constant only when it is essential. 
We recall also some of the results concerning the well-posedness for \eqref{eq:nls} already available, such as \cite{ArRo} \cite{FeYU}, \cite{Li} for the SCH and as
\cite{GiVel}, \cite{NaOz}, \cite{Wa} in the HF framework. Then we can summarize them as: 

\begin{proposition}\label{ConsLaw}
Let $d\geq 1$ and assume  \eqref{eq.nonlst} is such that $p>2$, $\gamma_1$  satisfy \eqref{eq:base} or $p=2$, $\max{\round{0, d-4}}<\gamma_1,\gamma_2<d$. Then for all $(\psi_{j,0})_{j=1}^N \in \Hin^1_x$ there exists a unique 
global solution $(\psi_j)_{j=1}^N \in \mathcal C(\R,\Hin^1_x) $ to \eqref{eq:nls}, moreover 
\begin{align}
  &M(\psi_j)(t)=\norma{\psi_j(0)}_{L^2_x} 
   \label{eq:macons}
  \end{align}
for all $j=1,\dots,N$ and 
  \begin{align}
  &E(\psi_1(t),\dots,\psi_N(t))=E(\psi_1(0),\dots,\psi_N(0))
  \label{eq:enecons},
\end{align}
with $E(\psi_1(t),\dots,\psi_N(t))$ as in \eqref{eq:energy}.
\end{proposition}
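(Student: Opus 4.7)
The proposition collects what is essentially a standard local well-posedness plus conservation-law argument, specialized to the coupled system with non-local nonlinearities. My plan is to first establish local-in-time existence and uniqueness in $\mathcal{H}^1_x$ by a fixed point argument on the Duhamel formulation, then prove mass and energy conservation by the usual multiplier technique (formally for smooth solutions, then by density), and finally promote the local solution to a global one using the a priori $H^1$-bound supplied by energy conservation in the defocusing regime.

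For local existence, I would work on the complete metric space
\[
X_T=\Bigl\{(\psi_j)_{j=1}^N\in \mathcal{C}([-T,T];\mathcal{H}^1_x)\cap L^q([-T,T];\mathcal{W}^{1,r}_x): \ \|(\psi_j)\|_{X_T}\le 2C\|(\psi_{j,0})\|_{\mathcal{H}^1_x}\Bigr\}
\]
for an admissible Strichartz pair $(q,r)$, and set up the contraction
\[
\Phi(\psi_j)(t)=e^{it\Delta}\psi_{j,0}-i\int_0^t e^{i(t-s)\Delta}\sum_{k=1}^N G(\psi_j,\psi_k)(s)\,ds.
\]
The key nonlinear estimate is obtained by Hardy--Littlewood--Sobolev: for the Choquard-type term one has $\|\,|x|^{-(d-\gamma_1)}\!*|\psi_k|^{p}\,\|_{L^{a}_x}\lesssim \|\psi_k\|_{L^{bp}_x}^{p}$ with $1/a=1/b-\gamma_1/d$, and then Hölder combined with the Sobolev embedding $H^1_x\hookrightarrow L^{2p}_x$ (which is exactly where \eqref{eq:base} is used, including the condition $\gamma_1>\max(0,d-4)$ when $p=2$) controls $G(\psi_j,\psi_k)$ in a dual Strichartz norm. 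The Hartree--Fock exchange pieces are treated analogously since $|\psi_k|^2$ and $\bar\psi_k\psi_j$ have the same Lebesgue scaling. Sufficient smallness of $T$ then gives a unique fixed point, and the standard bootstrap upgrades the regularity to $\mathcal{C}([-T,T];\mathcal{H}^1_x)$.

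Mass and energy conservation I would derive first for a smooth approximating sequence (e.g.\ regularized initial data and a mollified nonlinearity). Multiplying the $j$-th equation by $\bar\psi_j$, integrating over $\R^d$ and taking the imaginary part yields $\frac{d}{dt}M(\psi_j)=0$; the crucial point is that each term $\lambda_{jk}[|x|^{-(d-\gamma_1)}*|\psi_k|^p]|\psi_j|^{p}$ is real-valued and that the exchange contribution $\beta\int[|x|^{-(d-\gamma_2)}*\bar\psi_k\psi_j]\psi_k\bar\psi_j\,dx$ is real by symmetry of the kernel, so its imaginary part vanishes after integration. For the energy, I would multiply by $\partial_t\bar\psi_j$, sum over $j$, and take the real part, observing that the sum $\sum_k G(\psi_j,\psi_k)$ is precisely the variational derivative of $E$ with respect to $\bar\psi_j$ (this is where the symmetry $\lambda_{jk}=\lambda_{kj}$, which can be assumed without loss of generality, and the Hermitian structure of the exchange kernel matter). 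Passing to the limit using the continuity of the flow in $\mathcal{H}^1_x$ transfers both conservation laws to the $H^1$-solution. Since $\beta,\lambda_{jk}\ge 0$, the energy dominates $\sum_j\|\nabla\psi_j\|_{L^2_x}^2$, and together with mass conservation this yields an a priori $\mathcal{H}^1_x$-bound; the blow-up alternative then extends the local solution to all of $\R$.

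The main obstacle, in my view, is not the local theory but the rigorous justification of the energy identity for merely $H^1$-regular solutions. The nonlinearity $\sum_k G(\psi_j,\psi_k)$ only lies in $H^{-1}_x$, so the pairing $\langle \sum_k G(\psi_j,\psi_k),\partial_t\psi_j\rangle$ is not a priori well defined (since $\partial_t\psi_j\in H^{-1}_x$ as well). One therefore needs either a Kato-type regularization argument or a careful approximation by $H^2$-data together with continuous dependence, and one must check that the HLS-type estimates on the full nonlinearity \eqref{eq.nonlst} respect this limit uniformly — this is the only step where the distinction between $p>2$ and $p=2$ (with the stricter range $\gamma_1,\gamma_2<d$) genuinely enters the proof of conservation. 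Everything else proceeds by routine adaptation of the single-equation arguments in \cite{Ca}, \cite{FeYU}, \cite{NaOz}, \cite{Wa}, to the $N$-component system.
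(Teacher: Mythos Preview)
Your proposal is correct and follows essentially the same route as the paper: the paper does not give a detailed argument but simply invokes the standard energy method (Theorem 3.3.9 and Remark 3.3.12 in \cite{Ca}) together with the Hardy--Littlewood--Sobolev bound
\[
\int_{\R^d}\bigl(|x|^{-(d-\gamma)}\ast |\psi|^{p}\bigr)|\psi|^{p}\,dx\lesssim \|\psi\|^{2p}_{L_x^{2pd/(d+\gamma)}}\lesssim \|\psi\|^{2p}_{H^1_x}
\]
and the defocusing sign, which is exactly the content of your sketch. Your identification of the density/regularization step for the energy identity as the only genuinely delicate point is accurate and is precisely what the reference to \cite{Ca} is meant to cover.
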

The proposition above can be obtained by standard energy method (see Theorem 3.3.9 and Remark 3.3.12 in \cite{Ca}) combined with the inequality
 \begin{equation*}
\int_{\R^d}({|x|^{-(d-\gamma)}}\ast |\psi|^{p})|\psi|^{p} dx\lesssim  \|\psi\|^{2p}_{L_x^{\frac{2pd}{d+\gamma}}}
\lesssim \|\psi\|^{2p}_{H_x^1},
\end{equation*}
for $p\in [(d+\gamma)/d,(d+\gamma)/(d-2)]$ ($p\in [(d+\gamma)/d,\infty)$, if $d=1,2$), as well as the defocusing nature of the system.

\section{Morawetz identities and nonlinear interaction Morawetz inequalities}\label{InterMor}
We provide, thorough this section, the fundamental tools for the proof of our first main theorem. 
We start by obtaining Morawetz-type identities, which are comparable to the ones holding for the single NLS. From now on we hide the variable $t$ for simplicity, spreading it out only when necessary.
Moreover, we find suitable to set up the following notations: given a function $h\in H^1(\R^d,\C)$, we denote by
\begin{equation}\label{eq:notation}
 \mathit m_h(x):=|h(x)|^2,
  \qquad
  \mathit {j}_h(x):=\Im\left[\overline{h}\nabla h(x)\right],
\end{equation}
the mass and momentum densities, respectively. We have the Morawetz identities for non-local nonlinearities.
\begin{lemma}\label{le:morV}
Let $d\geq1$ and $(\psi_j)_{j=1}^N \in\mathcal C(\R,H^1(\R^d)^N)$ be as in Proposition \ref{ConsLaw},
 let $a=a(x):\R^d\to\R$ be a sufficiently regular and decaying function, and indicate by
\begin{equation*}
   \mathcal V(t):=\sum_{j=1}^N\int_{\R^d}a(x) \, \m_{\psi_j}(x)\,dx.
\end{equation*}
The following identities hold:
\begin{align}
\dot {\mathcal V}(t)
=\sum_{j=1}^N \int_{\R^d}a(x) \dot \m_{\psi_j}(x)\,dx
=2\sum_{j=1}^N \int_{\R^d} \je_{\psi_j}(x)  \cdot\nabla a(x)\,dx
\label{eq:morV}
\\
\ddot {\mathcal V}(t)
=\sum_{j=1}^N \int_{\R^d}a(x)\ddot \m_{\psi_j}(x)\,dx
\label{eq:morV2}
\\
=\sum_{j=1}^N\left[-\int_{\R^d} \m_{\psi_j(x)}(x)\Delta^2a(x)\,dx
+4  \int_{\R^d}\nabla \psi_j(x)D^2a(x)\cdot\nabla \psi_j(x)\,dx \right]
\nonumber
\\
+\frac{2(p-2)}{p}
\sum_{j,k=1}^N \lambda_{jk}\int_{\R^d} \Delta a(x)\squad{|x|^{-(d-\gamma_1)}*| \psi_k|^p} | \psi_j(x)|^{p}\,dx ,
\nonumber
\\
-
\frac 4 p\sum_{j,k=1}^N \lambda_{jk}\int_{\R^d}\nabla a(x)\cdot \nabla \squad{|x|^{-(d-\gamma_1)}*| \psi_k|^p} | \psi_j(x)|^{p}\,dx ,
\nonumber
\\
-
2\beta\sum_{j,k=1}^N \int_{\R^d}\nabla a(x)\cdot \nabla F(x,\psi_j, \overline \psi_j, \psi_{k}, \overline \psi_{k} )\,dx ,
\nonumber
\end{align}
with $\beta=0$ if $p>2$,
\begin{align}\label{eq:nonMHF}
F(x,\psi_j, \overline \psi_j, \psi_{k}, \overline \psi_{k})=\\
\squad{|x|^{-(d-\gamma_2)}*| \psi_k|^2 }|\psi_j(x)|^2-\squad{|x|^{-(d-\gamma_2)}*\bar \psi_k \psi_j  } \psi_k(x) \overline \psi_j(x),
\nonumber
\end{align}
 for any $j,k=1,\dots,N$, $D^2a\in\mathcal M_{d\times d}(\R^d)$ is the Hessian matrix of $a$ and $\Delta^2a=\Delta(\Delta a)$ the bi-laplacian operator.
\end{lemma}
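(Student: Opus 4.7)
The plan is to derive, directly from \eqref{eq:nls}, local continuity laws for the mass densities $\m_{\psi_j}$ and for the momentum densities $\je_{\psi_j}$, and then test them against $a$ and $\nabla a$ respectively. The two algebraic ingredients that do all the work are: (i) the reality properties of the nonlinearity, pointwise for the Choquard block and after symmetrization in $j\leftrightarrow k$ for the Hartree--Fock exchange piece, and (ii) the commutation of $\nabla_x$ with the convolution operators $|x|^{-(d-\gamma_i)}*\,\cdot\,$.

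For \eqref{eq:morV} I multiply the $j$-th equation of \eqref{eq:nls} by $\bar\psi_j$ and take twice the real part to obtain the pointwise mass balance
\begin{equation*}
\partial_t\m_{\psi_j}+2\nabla\cdot\je_{\psi_j}=2\Im\Bigl(\bar\psi_j\sum_{k=1}^N G(\psi_j,\psi_k)\Bigr).
\end{equation*}
The products $[|x|^{-(d-\gamma_1)}*|\psi_k|^p]|\psi_j|^p$ and $[|x|^{-(d-\gamma_2)}*|\psi_k|^2]|\psi_j|^2$ are real pointwise and so contribute nothing to the imaginary part. Only the HF exchange piece is not real pointwise, but the double sum $\sum_{j,k}[|x|^{-(d-\gamma_2)}*\bar\psi_k\psi_j]\psi_k\bar\psi_j$ is invariant under the swap $j\leftrightarrow k$ composed with complex conjugation, hence real. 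Summing the balance in $j$, multiplying by $a(x)$ and integrating by parts yields \eqref{eq:morV}.

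To establish \eqref{eq:morV2} I differentiate $\dot{\mathcal V}$ once more in $t$, producing $\ddot{\mathcal V}=2\sum_j\int\nabla a\cdot\partial_t\je_{\psi_j}\,dx$, and then invoke the momentum law derived from \eqref{eq:nls}: setting $F_j:=\sum_k G(\psi_j,\psi_k)$,
\begin{equation*}
\partial_t\je_{\psi_j,\ell}=-2\partial_k\Re(\partial_k\bar\psi_j\,\partial_\ell\psi_j)+\tfrac12\partial_\ell\Delta\m_{\psi_j}+\Re\bigl(\overline{F_j}\,\partial_\ell\psi_j-\bar\psi_j\,\partial_\ell F_j\bigr).
\end{equation*}
Pairing the linear part with $\partial_\ell a$, integrating by parts twice and using the symmetry of $D^2 a$ reproduces the free contribution $-\int\m_{\psi_j}\Delta^2 a\,dx+4\int\nabla\bar\psi_j D^2a\,\nabla\psi_j\,dx$, i.e.\ the second and third lines of \eqref{eq:morV2}.

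It remains to identify the nonlinear contribution with the last two lines of \eqref{eq:morV2}. For the Choquard block (the $\lambda_{jk}$ terms in $F_j$), writing $V_k:=|x|^{-(d-\gamma_1)}*|\psi_k|^p$ and using $|\psi_j|^{p-2}\Re(\bar\psi_j\partial_\ell\psi_j)=\tfrac{1}{p}\partial_\ell|\psi_j|^p$, a direct expansion gives the corresponding piece of $\Re(\overline{F_j}\partial_\ell\psi_j-\bar\psi_j\partial_\ell F_j)$ as $-\tfrac{p-2}{p}\sum_k\lambda_{jk}V_k\partial_\ell|\psi_j|^p-\sum_k\lambda_{jk}\partial_\ell V_k\,|\psi_j|^p$; one integration by parts on the $V_k\partial_\ell|\psi_j|^p$ summand then converts this into the $\Delta a$ and $\nabla a\cdot\nabla V_k$ terms displayed in \eqref{eq:morV2}. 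The main technical hurdle is the Hartree--Fock block, where differentiating the exchange convolution couples the indices $j,k$ through two distinct points. I would expand the analogous $\Re$-expression into four summands, pair with $\partial_\ell a$ and symmetrize in $j\leftrightarrow k$ using $\overline{|x|^{-(d-\gamma_2)}*\bar\psi_k\psi_j}=|x|^{-(d-\gamma_2)}*\bar\psi_j\psi_k$: two of the four summands cancel pairwise under the swap, and the remaining ones reassemble as $-\nabla a\cdot\nabla F(x,\psi_j,\bar\psi_j,\psi_k,\bar\psi_k)$ with $F$ as in \eqref{eq:nonMHF}, delivering the last line of \eqref{eq:morV2}. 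All integrations by parts are legitimized by first approximating the data in $\Hin^1_x$ by Schwartz functions and passing to the limit after the identity has been established.
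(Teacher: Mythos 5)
Your argument is correct and is in substance the paper's own proof: the paper likewise computes $\ddot{\mathcal V}=2\sum_j\partial_t\int \je_{\psi_j}\cdot\nabla a\,dx$, pairs the equation with the multiplier $\Delta a\,\bar\psi_j+2\nabla a\cdot\nabla\bar\psi_j$, uses the identity $|\psi_j|^{p-2}\Re(\bar\psi_j\nabla\psi_j)=\tfrac1p\nabla|\psi_j|^p$ plus one integration by parts for the Choquard block, exploits the $j\leftrightarrow k$ symmetry of the exchange convolution for the Hartree--Fock block, and legitimizes everything by a density argument. Your reorganization through the pointwise momentum balance for $\partial_t\je_{\psi_j}$ (substituting the equation before integrating by parts, rather than after) is only a change of bookkeeping, and all the key cancellations you identify are exactly those in the paper's computation.
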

\begin{proof}
We will proceed similarly to \cite{CT} (see also \cite{Ta}, \cite{TzVi}).  We shall assume that
 $(\psi_j)_{j=1}^N$ is a smooth solution to \eqref{eq:nls}, taking into account that the case 
  $(\psi_j)_{j=1}^N \in\mathcal C(\R,H^1(\R^d)^N)$ can be established by a density argument (we cite, for example, \cite{GiVel}). The equation \eqref{eq:morV} is simple to derive. We carry out some details for providing \eqref{eq:morV2} only. By means of an integration by parts and thanks to \eqref{eq:nls}, we have
  \begin{align}\label{eq:MorId1}
  2\sum_{j=1}^N\partial_t \int_{\R^d} \je_{\psi_j}(x)\cdot \nabla a(x) \,dx  \\
     =- 2\sum_{j=1}^N\Im  \int_{\R^d} \partial_t \psi_j(x) [\Delta a(x)\bar \psi_j(x)+2\nabla a(x)\cdot \nabla \bar \psi_j(x)]\,dx 
    \nonumber\\
     =2\sum_{j=1}^N\Re \int_{\R^d} i \partial_t \psi_j(x) [\Delta a(x)\bar \psi_j(x)+2\nabla a(x)\cdot \nabla \bar \psi_j(x)]\,dx
     \nonumber\\ 
    =2\sum_{j=1}^N\Re \int_{\R^d} \Big[-\Delta \psi_j(x) +G(\psi_j , \psi_k)\Big][\Delta a(x)\bar \psi_j(x)+2\nabla a(x)\cdot \nabla \bar \psi_j(x)]\,dx.
    \nonumber
      \end{align}
First, one can get 
  \begin{align}\label{eq:MorId2}
    2\sum_{j=1}^N\Re\int_{\R^d} -\Delta \psi_j(x) [\Delta a(x)\bar \psi_j(x)+2\nabla a(x)\cdot \nabla \bar \psi_j(x)]\,dx\\
    = - \sum_{j=1}^N\int_{\R^d} \Delta^2 a(x) \abs{\psi_j(x)}^2\,dx + 4\sum_{j=1}^N\int_{\R^d} \nabla \psi_j(x) D^2 \psi_j(x)\nabla \bar \psi_j(x)\,dx.
    \nonumber
  \end{align}
Furthermore we obtain
\begin{align}\label{eq.id1}
    2  \sum_{j,k=1}^N \Re \int_{\R^d}G(\psi_j , \psi_k)
    [\Delta a(x)\bar \psi_j(x)+2\nabla a(x)\cdot \nabla \bar \psi_j(x)]\,dx \\
    = 2\sum_{j,k=1}^N \Re  \int_{\R^d}G_1(\psi_j , \psi_k)[\Delta a(x)\bar \psi_j(x)+2\nabla a(x)\cdot \nabla \bar \psi_j(x)]\,dx
    \nonumber\\
    +2\sum_{j,k=1}^N \Re  \int_{\R^d}G_2(\psi_j , \psi_k) [\Delta a(x)\bar \psi_j(x)+2\nabla a(x)\cdot \nabla \bar \psi_j(x)]\,dx,
    \nonumber
\end{align}
with
\begin{align}\label{eq.id2}
    2   \sum_{j,k=1}^N\Re \int_{\R^d} G_1(\psi_j , \psi_k)
    [\Delta a(x)\bar \psi_j(x)+2\nabla a(x)\cdot \nabla \bar \psi_j(x)]\,dx \\
 =
2\sum_{j,k=1}^N \lambda_{jk}\int_{\R^d} \Delta a(x)\squad{|x|^{-(d-\gamma_1)}*| \psi_k|^p} | \psi_j(x)|^{p}\,dx
\nonumber\\
+4\sum_{j,k=1}^N \lambda_{jk}\Re\int_{\R^d} \nabla a(x)\squad{|x|^{-(d-\gamma_1)}*| \psi_k|^p} \cdot| \psi_j(x)|^{p-2}\psi_j(x)\nabla \bar\psi_j(x)\,dx
\nonumber\\
=2\round{1-\frac 2p}\sum_{j,k=1}^N \lambda_{jk}\int_{\R^d} \Delta a(x)\squad{|x|^{-(d-\gamma_1)}*| \psi_k|^p} | \psi_j(x)|^{p}dx
\nonumber\\
-\frac 4p\sum_{j,k=1}^N \lambda_{jk}\int_{\R^d}\nabla a(x)\cdot \nabla \squad{|x|^{-(d-\gamma_1)}*| \psi_k|^p} | \psi_j(x)|^{p}\,dx
\nonumber
\end{align}
and
\begin{align}\label{eq.id3}
    2  \sum_{j,k=1}^N \Re \int_{\R^d} G_2(\psi_j , \psi_k)
    [\Delta a(x)\bar \psi_j(x)+2\nabla a(x)\cdot \nabla \bar \psi_j(x)]\,dx \\
 =
2\beta\sum_{j,k=1}^N \int_{\R^d} \Delta a(x)\squad{|x|^{-(d-\gamma_2)}*| \psi_k|^2 }|\psi_j(x)|^2\,dx
\nonumber\\
+2\beta\sum_{j,k=1}^N \Re\int_{\R^d} 2\nabla a(x)\squad{|x|^{-(d-\gamma_2)}*| \psi_k|^2 } \cdot\psi_j(x) \nabla \bar \psi_j(x)\,dx
\nonumber\\
-2\beta\sum_{j,k=1}^N \int_{\R^d} \Delta a(x)\squad{|x|^{-(d-\gamma_2)}*\bar \psi_k \psi_j  } \psi_k(x) \bar \psi_j(x)\,dx
\nonumber\\
-2\beta\sum_{j,k=1}^N\Re\int_{\R^d} 2\nabla a(x)\squad{|x|^{-(d-\gamma_2)}*\bar \psi_k \psi_j  } \psi_k(x) \cdot \nabla \bar\psi_j(x)\,dx.
\nonumber
\end{align}
An integration by parts of the the second term on the r.h.s. of the above identity \eqref{eq.id3} enhances to
\begin{align}\label{eq.id4}
2 \beta\sum_{j,k=1}^N \Re\int_{\R^d} \nabla a(x)\squad{|x|^{-(d-\gamma_2)}*| \psi_k|^2 }\cdot \nabla | \psi_j(x)|^2\,dx
\\
=-2\beta\sum_{j,k=1}^N \int_{\R^d} \Delta a(x)\squad{|x|^{-(d-\gamma_2)}*| \psi_k|^2 } | \psi_j(x)|^2\,dx
\nonumber\\
-2 \beta\sum_{j,k=1}^N \int_{\R^d} \nabla a(x)\cdot \nabla\squad{|x|^{-(d-\gamma_2)}*| \psi_k|^2 } | \psi_j(x)|^2\,dx.
\nonumber
\end{align}
By a further integration by parts, one has for the last term in \eqref{eq.id3}, instead,
\begin{align}\label{eq.id5}
4\beta\sum_{j,k=1}^N\Re\int_{\R^d}\int_{\R^d} \nabla a(x)\frac{\bar\psi_k(y) \psi_k(x)}{|x-y|^{(d-\gamma_2)} }\cdot\nabla \psi_j(y) \bar\psi_j(x)\,dxdy
\\
=-2\beta\sum_{j,k=1}^N \int_{\R^d} \Delta a(x)\squad{|x|^{-(d-\gamma_2)}*\bar \psi_k \psi_j } \psi_k(x) \bar \psi_j(x)\,dx
\nonumber\\
-2\beta \sum_{j,k=1}^N \int_{\R^d} \nabla a(x)\cdot \nabla\squad{|x|^{-(d-\gamma_2)}*\bar \psi_k \psi_j } \psi_k(x) \bar \psi_j(x)\,dx.
\nonumber
\end{align}
We can utilize now \eqref{eq.id2} in combination with \eqref{eq.id3}, \eqref{eq.id4} and \eqref{eq.id5} to rewrite 
 \eqref{eq.id1} as
\begin{align}\label{eq.MorId3}
    2  \sum_{j=1}^N \Re \int_{\R^d}G(\psi_j , \psi_k)
    [\Delta a(x)\bar \psi_j(x)+2\nabla a(x)\cdot \nabla \bar \psi_j(x)]\,dx \\
    = \frac{2(p-2)}{p}
\sum_{j,k=1}^N \lambda_{jk}\int_{\R^d} \Delta a(x)\squad{|x|^{-(d-\gamma_1)}*| \psi_k|^p} | \psi_j(x)|^{p}\,dx ,
\nonumber
\\
-
\frac 4p\sum_{j,k=1}^N \lambda_{jk}\int_{\R^d}\nabla a(x)\cdot \nabla \squad{|x|^{-(d-\gamma_1)}*| \psi_k|^p} | \psi_j(x)|^{p}\,dx ,
\nonumber
\\
-
2\beta\sum_{j,k=1}^N \int_{\R^d}\nabla a(x)\cdot \nabla F(x,\psi_j, \overline \psi_j, \psi_{k}, \overline \psi_{k} )\,dx ,
\nonumber
\end{align}
with $F(x,\psi_j, \overline \psi_j, \psi_{k}, \overline \psi_{k} )$ as in \eqref{eq:nonMHF}. Then the above identities  \eqref{eq:MorId2} and \eqref{eq.MorId3} bring us to the proof of \eqref{eq:morV}.
\end{proof}

One can now apply the previous lemma for proving the following interaction Morawetz identities and inequalities for non-local nonlinearities.

\begin{lemma}\label{le:intmorNL}
Let $(\psi_j)_{j=1}^N \in\mathcal C(\R, H^1(\R^d)^N)$ be as in Proposition \ref{ConsLaw}, $a=a(|x|):\R^d\to\R$ be a convex radial, sufficiently regular and decaying function. Indicate by $a^{\star}=a^{\star}(x,y):=a(|x-y|):\R^{2d}\to\R$ and by
\begin{equation*}
  \mathcal I(t):=\sum_{j,\ell=1}^N\int_{\R^d}\int_{\R^d}a^{\star}(x,y) \m_{\psi_j}(x)\m_{\psi_\ell}(y)\,dxdy.
\end{equation*}
The following holds:
\begin{align}
  \dot {\mathcal I}(t)
  =2\sum_{j,\ell=1}^N\int_{\R^d}\int_{\R^d}\je_{\psi_j}(x)\cdot\nabla_xa^{\star}(x,y) \, \m_{\psi_\ell}(y)\,dxdy,
  \label{eq:intmorV}
  \\
 N^{C}_{(p,a^{\star})}(t)+N^{HF}_{(2,a^{\star})}(t)+R^{C}_{(p,a^{\star})}(t)\leq \ddot {\mathcal I}(t),
  \label{eq:intmorV2}
  \end{align}
where
\begin{align}  \label{eq:nonlinC}
N^{C}_{(p,a^{\star})}(t)=
\\
\sum_{\substack{j,k,\ell=1}}^N\widetilde\lambda_{jk}\int_{\R^d}\int_{\R^d}
 \Delta_xa^{\star}(x,y) \squad{|x|^{-(d-\gamma_1)}*| \psi_k(x)|^p}|\psi_j(x)|^{p}\m_{\psi_\ell}(y)\,dxdy,
 \nonumber
\end{align}
with $ \widetilde\lambda_{jk}=4\lambda_{jk}(p-2)/p,$
\begin{align}  \label{eq:nonlinRC}
R^{C}_{(p,a^{\star})}(t)=
\\
-\frac 8p \sum_{\substack{j,k,\ell=1}}^N\lambda_{jk}\int_{\R^d}\int_{\R^d}
 \nabla_xa^{\star}(x,y) \cdot \nabla_x \squad{|x|^{-(d-\gamma_1)}*| \psi_k|^p}|\psi_j(x)|^{p}\m_{\psi_\ell}(y)\,dxdy,
 \nonumber
\end{align}
\begin{align}  \label{eq:nonlinHF}
N^{HF}_{(2,a^{\star})}(t)=
\\
-4\beta\sum_{\substack{j,k,\ell=1}}^N\int_{\R^d}\int_{\R^d}
 \nabla_xa^{\star}(x,y)\cdot \nabla_x F(x,\psi_j, \overline \psi_j, \psi_{k}, \overline \psi_{k} ) \m_{\psi_\ell}(y)\,dxdy,
 \nonumber
\end{align}
with $\beta=0$ if $p>2$ and $F(x,\psi_j, \overline \psi_j, \psi_{k}, \overline \psi_{k} )$ as in \eqref{eq:nonMHF}.
\end{lemma}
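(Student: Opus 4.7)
The plan is to follow the same density argument as in Lemma \ref{le:morV}: we work with smooth solutions $(\psi_j)_{j=1}^N$ and extend to $\mathcal C(\R,H^1(\R^d)^N)$-solutions by approximation. We then differentiate $\mathcal I(t)$ under the integral sign, relying on the continuity equation
\begin{equation*}
\partial_t \m_{\psi_j}(x) = -2\dive_x \je_{\psi_j}(x),
\end{equation*}
which is a direct consequence of \eqref{eq:nls}. For the first identity \eqref{eq:intmorV}, integrating by parts in $x$ inside the double integral transfers the divergence onto $a^{\star}(x,y)$, with $y$ and $\m_{\psi_\ell}(y)$ kept fixed, yielding the stated expression.

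For the second derivative, I would use the Leibniz expansion
\begin{equation*}
\partial_t^2[\m_{\psi_j}(x)\m_{\psi_\ell}(y)] = \ddot \m_{\psi_j}(x)\m_{\psi_\ell}(y) + 2\dot \m_{\psi_j}(x)\dot \m_{\psi_\ell}(y) + \m_{\psi_j}(x)\ddot \m_{\psi_\ell}(y).
\end{equation*}
The outer two terms are treated by applying Lemma \ref{le:morV} slice-by-slice: one views $x\mapsto a^{\star}(x,y)$ (respectively $y\mapsto a^{\star}(x,y)$) as the test function $a$ in Lemma \ref{le:morV}, and then integrates the resulting identity against $\m_{\psi_\ell}(y)\,dy$ (respectively $\m_{\psi_j}(x)\,dx$). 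This produces precisely the nonlinear contributions $N^{C}_{(p,a^{\star})}$, $N^{HF}_{(2,a^{\star})}$ and $R^{C}_{(p,a^{\star})}$ displayed in the statement, together with a bi-laplacian term $-\sum_{j,\ell}\int\int \m_{\psi_j}(x)\m_{\psi_\ell}(y)\Delta_x^2 a^\star\,dxdy$ and two Hessian-kinetic terms of the form $4\int\int (\nabla\psi_j D^2_x a^{\star}\nabla\bar\psi_j)\m_{\psi_\ell}\,dxdy$.

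For the cross term $2\int\int a^\star \dot\m_{\psi_j}(x)\dot\m_{\psi_\ell}(y)\,dxdy$, I would integrate by parts twice, once in $x$ and once in $y$, and exploit the fundamental symmetries $\nabla_x a^{\star}(x,y) = -\nabla_y a^{\star}(x,y)$ and $\partial_{x_i}\partial_{y_k}a^{\star} = -(D^2_x a^{\star})_{ik}$, rewriting it as $-8\int\int \je_{\psi_j}(x)\cdot D^2_x a^{\star}\cdot \je_{\psi_\ell}(y)\,dxdy$. Combining this with the two Hessian terms and using the pointwise inequality $\nabla\psi\cdot A\cdot \nabla\bar\psi \ge \m_{\psi}^{-1}\je_{\psi}\cdot A\cdot \je_{\psi}$ for any positive semidefinite $A$ (a consequence of the polar decomposition $\psi=\sqrt{\m_\psi}\,e^{i\theta}$), a completion-of-squares argument bounds the total kinetic contribution from below by the nonnegative quadratic form
\begin{equation*}
\sum_{j,\ell}\int\int \round{\sqrt{\tfrac{\m_{\psi_\ell}(y)}{\m_{\psi_j}(x)}}\je_{\psi_j}(x) - \sqrt{\tfrac{\m_{\psi_j}(x)}{\m_{\psi_\ell}(y)}}\je_{\psi_\ell}(y)}\cdot D^2_x a^{\star}\cdot (\cdots)\,dxdy \ge 0,
\end{equation*}
since convexity of the radial profile of $a$ guarantees $D^2 a^{\star}\ge 0$. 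Discarding this nonnegative quadratic form, together with the bi-laplacian term (which, for the convex radial $a$ used later in the paper, also has a favorable sign or vanishes), yields the inequality \eqref{eq:intmorV2}.

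The main technical point is the bookkeeping of the gradients acting on the Riesz convolutions in $G_1$ and $G_2$: once Lemma \ref{le:morV} is applied with $a^{\star}(\cdot,y)$ in place of $a$, the $x$-gradients of $\squad{|x|^{-(d-\gamma_1)}*|\psi_k|^p}$ and of $F(x,\psi_j,\overline{\psi}_j,\psi_k,\overline{\psi}_k)$ remain intact under the subsequent $y$-integration against $\m_{\psi_\ell}(y)$, delivering exactly \eqref{eq:nonlinC}, \eqref{eq:nonlinRC} and \eqref{eq:nonlinHF} without any further cancellation with the kinetic part—this is the precise sense in which the local/nonlocal dichotomy becomes irrelevant once one passes to the bilinear Morawetz framework.
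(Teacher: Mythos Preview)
Your decomposition of $\ddot{\mathcal I}$ and your completion-of-squares argument for the Hessian/cross kinetic block are correct and match the paper's observation that $\mathcal{II}_2(t)\ge 0$. The gap is in the final step, where you discard the bi-laplacian term
\[
-2\sum_{j,\ell}\int_{\R^d}\int_{\R^d}\m_{\psi_j}(x)\m_{\psi_\ell}(y)\,\Delta^2_x a^\star(x,y)\,dxdy
\]
on the grounds that it ``has a favorable sign or vanishes'' for the convex radial $a$ used later. This is false in the generality of the lemma and, more importantly, false even for the specific choice $a^\star=|x-y|$ in dimension $d=2$: there $\Delta^2|x|=|x|^{-3}>0$, so the bi-laplacian contribution is strictly negative. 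Since Proposition~\ref{dimg3} applies the lemma in all dimensions $d\ge 1$, inequality \eqref{eq:intmorV2} cannot be obtained by controlling the bi-laplacian term in isolation.

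The paper avoids this by \emph{not} invoking the polar inequality $\nabla\psi\cdot A\cdot\nabla\bar\psi\ge \m_\psi^{-1}\je_\psi\cdot A\cdot\je_\psi$, which discards precisely the piece $\nabla\sqrt{\m_\psi}\cdot A\cdot\nabla\sqrt{\m_\psi}$ needed to absorb the bi-laplacian. Instead, after rewriting the bi-laplacian term as $2\sum_{j,\ell}\int\int \nabla_x\m_{\psi_j}\,D^2_x a^\star\,\nabla_y\m_{\psi_\ell}$ via two integrations by parts, it introduces
\[
H^1_{j\ell}=\psi_j(x)\nabla_y\overline{\psi_\ell(y)}+\nabla_x\psi_j(x)\overline{\psi_\ell(y)},\qquad
H^2_{j\ell}=\psi_j(x)\nabla_y\psi_\ell(y)-\nabla_x\psi_j(x)\psi_\ell(y),
\]
expresses the full Hessian/cross block as $2\sum_{j,\ell}\int\int\big(H^1_{j\ell}D^2_xa^\star\overline{H^1_{j\ell}}+H^2_{j\ell}D^2_xa^\star\overline{H^2_{j\ell}}\big)$, and then uses the pointwise algebraic identity
\[
2\nabla_x\m_{\psi_j}\,D^2_xa^\star\,\nabla_y\m_{\psi_\ell}
+2H^1_{j\ell}D^2_xa^\star\overline{H^1_{j\ell}}
+2H^2_{j\ell}D^2_xa^\star\overline{H^2_{j\ell}}
=4H^1_{j\ell}D^2_xa^\star\overline{H^1_{j\ell}}\ge 0.
\]
This shows that the \emph{sum} of the bi-laplacian term and the kinetic block is nonnegative for every convex radial $a$ and every $d\ge 1$, which is exactly what \eqref{eq:intmorV2} requires. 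Your argument can be repaired by replacing the polar-decomposition step with this identity.
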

\begin{proof}
 As formerly done, we prove the identities for a smooth solution of \eqref{eq:nls}, 
 moving to the general case $(\psi_j)_{j=1}^N \in\mathcal C(\R,H^1(\R^d)^N)$ by an usual density argument.
  First, we point out that \eqref{eq:intmorV},  because of the symmetry of the function $a^{\star}(x,y)=a(|x-y|)$, is equivalent to 
  \begin{align*}
    \dot {\mathcal I}(t) =2 \sum_{j,\ell=1}^N \int_{\R^d}\int_{\R^d}a^{\star}(x,y)
  \dot \m_{\psi_j}(x)\m_{\psi_\ell}(y)\,dxdy.
  \end{align*}
  Hence, \eqref{eq:intmorV} is straightforward  from \eqref{eq:morV} and Fubini's Theorem.
  We differentiate w.r.t. time variable working out now the equality
  \begin{align}\label{eq:mo1}
  \mathcal{\ddot I}(t)
  \\
  =
 -4\sum_{j,\ell=1}^N
 \Re \int_{\R^d}\int_{\R^d}
 \m_{\psi_\ell}(y) i\partial_t (\overline \psi_j(x)\nabla_x\psi_j(x))\cdot\nabla_xa^{\star}(x,y)\,dxdy
   \nonumber \\
    -2\sum_{j,\ell=1}^N \Re \int_{\R^d}\int_{\R^d}
 i\partial_t m_{\psi_j}(x)   \overline \psi_\ell(y)\nabla_y\psi_\ell(y)\cdot\nabla_ya^{\star}(x,y)\,dxdy
 \nonumber\\
  -2\sum_{j,\ell=1}^N \Re \int_{\R^d}\int_{\R^d} i\partial_t m_{\psi_\ell}(y)  \overline \psi_j(x)\nabla_x\psi_j(x)\cdot\nabla_xa^{\star}(x,y)\,dxdy
 \nonumber\\
 :=\mathcal {II}_1(t)+\mathcal {II}_2(t).
    \nonumber
  \end{align}
 By the identity \eqref{eq:morV2}, Fubini's Theorem and the
  symmetry of $a^{\star}(x,y)$ we achieve
  \begin{align}
  \mathcal {II}_1(t) =
  -2\sum_{j,\ell=1}^N\int_{\R^d}\int_{\R^d}
 \Delta^2_xa^{\star}(x,y) m_{\psi_j}(x)m_{\psi_\ell}(y)\,dxdy&
   \label{eq:2Main}
  \\
+ \sum_{\substack{j,k,\ell=1}}^N\widetilde\lambda_{jk}\int_{\R^d}\int_{\R^d}
 \Delta_xa^{\star}(x,y) \squad{|x|^{-(d-\gamma_1)}*| \psi_k(x)|^p}|\psi_j(x)|^{p}\m_{\psi_\ell}(y)\,dxdy,
 \nonumber\\
  -\frac 8p\sum_{\substack{j,k,\ell=1}}^N\lambda_{jk}\int_{\R^d}\int_{\R^d}
 \nabla_xa^{\star}(x,y) \cdot \nabla_x \squad{|x|^{-(d-\gamma_1)}*| \psi_k|^p}|\psi_j(x)|^{p}\m_{\psi_\ell}(y)\,dxdy,
 \nonumber\\
 -4\beta\sum_{\substack{j,k,\ell=1}}^N\int_{\R^d}\int_{\R^d}
 \nabla_xa^{\star}(x,y) \nabla_xF(x,\psi_j, \overline \psi_j, \psi_{k}, \overline \psi_{k} ) \m_{\psi_\ell}(y)\,dxdy,
 \nonumber
\end{align}
The first term of \eqref{eq:2Main}  above arises from the linear part of the equation, while the
other terms are connected to the nonlinearity contained in the equation. 
The linear term can be modified as follows
  \begin{equation}\label{eq:equivPV}
    \begin{split}
    &-2\sum_{j,\ell=1}^N\int_{\R^d}\int_{\R^d}\Delta_x^2a^{\star}(x,y) m_{\psi_j}(t,x)m_{\psi_\ell}(t,y)\,dxdy\\
    =&2\sum_{i,j,\ell=1}^N\int_{\R^d}\int_{\R^d} \partial_{x_i}\partial_{y_i}\Delta_x a^{\star}(x,y) m_{\psi_j}(t,x)m_{\psi_\ell}(t,y)\,dxdy\\
    =&2\sum_{j,\ell=1}^N\int_{\R^d}\int_{\R^d}\Delta_x a^{\star}(x,y) \nabla_{x} m_{\psi_j}(t,x)\cdot\nabla_{y}m_{\psi_\ell}(t,y)\,dxdy,
    \end{split}
  \end{equation}
by an integration by parts and taking again advantage of the property $\partial_{x_i}a^{\star}=-\partial_{y_i}a^{\star}.$
At the end, we have
\begin{align}\label{eq:mo2}
 \mathcal{II}_1(t) =  N^{C}_{(p,a^{\star})}(t)+N^{HF}_{(2,a^{\star})}(t)+R^{C}_{(p,a^{\star})}(t)\\
 +
 2\sum_{j,\ell=1}^N\int_{\R^d}\int_{\R^d}\Delta_x a^{\star}(x,y) \nabla_{x} m_{\psi_j}(t,x)\cdot\nabla_{y}m_{\psi_\ell}(t,y)\,dxdy.
 \nonumber
\end{align}
As well, by the Morawetz identities \eqref{eq:morV}, \eqref{eq:morV2} and Fubini's Theorem we get
  \begin{align}\label{eq:equivPV2}
  \mathcal{II}_2(t)  =
  &
4\sum_{j,\ell=1}^N\int_{\R^d}\int_{\R^d}
  \nabla \psi_j(x)D^2_xa^{\star}(x,y)\nabla\overline \psi_j(x)
  m_{\psi_\ell}(y)\,dxdy
   \nonumber\\
  &
   +4\sum_{j,\ell=1}^N\int_{\R^d}\int_{\R^d}
  m_{\psi_j}(x)\nabla\psi_\ell(y)D^2_ya^{\star}(x,y)\nabla\overline \psi_\ell(y)
  \,dxdy
  \\
  &
  +8\sum_{j,\ell=1}^N\int_{\R^d}\int_{\R^d}
  j_{\psi_j}(x)D^2_{xy}a^{\star}(x,y)\cdot j_{\psi_\ell}(y)\,dxdy,
  \nonumber
  \end{align}
 here we applied, at this point, the symmetry of $D^2 a^{\star}$ to drop the real part condition in the first two 
summands on the r.h.s. of the identity above.  Once more, the fact that $\partial_{x_i}a^{\star} = -\partial_{y_i}a^{\star}$
allows us to reshape  \eqref{eq:equivPV2} as 
   \begin{align}\label{eq:equivPV3}
  \mathcal{II}_2(t)= 4\sum_{j,\ell=1}\int_{\R^d}\int_{\R^d} \nabla_{y}\psi_\ell(y)D^2_{x}a(|x-y|)\nabla_{y}\overline \psi_\ell(y)|\psi_j(x)|^2\,dxdy&
 \nonumber \\
  4\sum_{j,\ell=1}\int_{\R^d}\int_{\R^d} \nabla_{x}\psi_j(x)D^2_{x}\phi(|x-y|)\nabla_{x}\overline \psi_j(x)|\psi_\ell(y)|^2\,dxdy&
  \\
  -8\sum_{j,\ell=1}\int_{\R^d}\int_{\R^d}
  \Im(\overline \psi_j(x)\nabla_{x}\psi_j(x))D^2_{x}a(|x-y|)
  \Im(\overline \psi_\ell(y)\nabla_{y}\psi_\ell(y))\,dxdy&
  \nonumber
  \end{align}
  and lastly to
  \begin{align}\label{eq:equivPV4}
    \mathcal{II}_2(t)
    \\
    =2\sum_{j,\ell=1}\int_{\R^d}\int_{\R^d}
 \left(H^1_{j\ell} D^2_{x}a(|x-y|)\overline{H^1_{j\ell}}+H^2_{j\ell} D^2_{x}a(|x-y|)\overline{H^2_{j\ell}}\right)\,dxdy,&  
 \nonumber
 \end{align}
where we set
   \begin{align*}
    H^1_{j\ell} 
    &
    :=\psi_j(t,x)\nabla_{y}\overline{\psi_\ell(t,y)}+\nabla_{x}\psi_j(t,x)\overline{\psi_\ell(t,y)},
    \\
    H^2_{j\ell}
    &
    :=\psi_j(t,x)\nabla_{y}\psi_\ell(t,y)-\nabla_{x}\psi_j(t,x)\psi_\ell(t,y).
  \end{align*}
  Thus by \eqref{eq:equivPV4}, and since $a$ is a convex function one achieves $\mathcal{II}_{2}(t)\geq 0$, for any $t\in\R$.
  We claim further that
  \begin{align}\label{eq:equivPVN}
 2\sum_{j,\ell=1}^N\int_{\R^d}\int_{\R^d}\Delta_x a^{\star}(x,y) \nabla_{x} m_{\psi_j}(t,x)\cdot\nabla_{y}m_{\psi_\ell}(t,y)\,dxdy+\mathcal{II}_{2}(t)\geq 0.
\end{align}
In fact by (see for instance to \cite{PlVe}), we obtain
 \begin{equation*}
    \begin{split}
    &-2\sum_{j,\ell=1}^N\int_{\R^d}\int_{\R^d}\Delta_x^2a^{\star}(x,y) m_{\psi_j}(x)m_{\psi_\ell}(y)\,dxdy\\
    =&-2\sum_{j,\ell=1}^N\int_{\R^d}\int_{\R^d}\Delta_x\Delta_y a(|x-y|) m_{\psi_j}(x)m_{\psi_\ell}(y)\,dxdy\\
    =&2\sum_{j,\ell=1}^N\int_{\R^d}\int_{\R^d} \nabla_{x} m_{\psi_j}(x)D_x^2a(|x-y|) \cdot\nabla_{y}m_{\psi_\ell}(y)\,dxdy,
    \end{split}
  \end{equation*}
then the l.h.s. of \eqref{eq:equivPVN} becomes equal to
\begin{align}\label{eq:equivPVN2}
 2\sum_{j,\ell=1}^N\int_{\R^d}\int_{\R^d} \nabla_{x} m_{\psi_j}(x)D_x^2a(|x-y|) \cdot\nabla_{y}m_{\psi_\ell}(y)\,dxdy
 \\
 +4\sum_{j,\ell=1}\int_{\R^d}\int_{\R^d}
 \left(H^1_{j\ell} D^2_{x}a(|x-y|)\overline{H^1_{j\ell}}+H^2_{j\ell} D^2_{x}a(|x-y|)\overline{H^2_{j\ell}}\right)\,dxdy.
 \nonumber
\end{align}  
A straight computation displays
\begin{align}\label{eq:equivPVN3}
 2\nabla_x|\psi_j(x)|^2D_x^2a(|x-y|) \cdot \nabla_y   |\psi_\ell(y)|^2
 \\
 +2H^1_{j\ell} D^2_{x}a(|x-y|)\overline{H^1_{j\ell}}+2H^2_{j\ell} D^2_{x}a(|x-y|)\overline{H^2_{j\ell}}
\nonumber \\
=2H^1_{j\ell} D^2_{x}a(|x-y|)\overline{H^1_{j\ell}}-2H^2_{j\ell} D^2_{x}a(|x-y|)\overline{H^2_{j\ell}}
 \nonumber\\
 +2H^1_{j\ell} D^2_{x}a(|x-y|)\overline{H^1_{j\ell}}+2H^2_{j\ell} D^2_{x}a(|x-y|)\overline{H^2_{j\ell}}=4H^1_{j\ell} D^2_{x}a(|x-y|)\overline{H^1_{j\ell}}\geq 0.
 \nonumber
\end{align}
where we employed along the calculation that the matrix $D^2_xa^{\star}=-D^2_{xy}a^{\star}=D^2_xa^{\star}$ is symmetric.
Gathering together \eqref{eq:equivPVN2} and \eqref{eq:equivPVN3} we have that \eqref{eq:equivPVN} is satisfied. With this last inequality in mind, we sum now 
$\mathcal{II}_{1}(t)$ with $\mathcal{II}_{2}(t)$ realizing that
\begin{align}\label{eq:equivPVNFin}
\mathcal{\ddot I}(t)= 2\sum_{j,\ell=1}^N\int_{\R^d}\int_{\R^d}\Delta_x a^{\star}(x,y) \nabla_{x} m_{\psi_j}(t,x)\cdot\nabla_{y}m_{\psi_\ell}(t,y)\,dxdy
\\
+\mathcal{II}_{2}(t)+ N^{C}_{(p,a^{\star})}(t)+N^{HF}_{(p,a^{\star})}(t)+R^{C}_{(p,a^{\star})}(t)
\nonumber\\
\geq  N^{C}_{(p,a^{\star})}(t)+N^{HF}_{(2,a^{\star})}(t)+R^{C}_{(p,a^{\star})}(t),
\nonumber
\end{align}
that is the desired \eqref{eq:intmorV2}.
  \end{proof}

The proof of Lemma \ref{le:intmorNL} accomplishes also the following proposition.

\begin{proposition}\label{prop:intmorbil}
 Let $(\psi_j)_{j=1}^N \in\mathcal C(\R, H^1(\R^d)^N)$  be as in Proposition \ref{ConsLaw} and
 $N^{C}_{(p,a^\star)}(t),\, N^{HF}_{(2,a^\star)}(t)$, $R^{C}_{(p,a^\star)}(t)$ as in Lemma \ref{le:intmorNL}, then the following holds.
 \begin{itemize}
  \item (Low regularity Morawetz interaction inequality)
   \begin{align}
  \ddot {\mathcal I}(t)
  \label{eq:intmor2aa}
  \geq  N^{C}_{(p,a^{\star})}(t)+N^{HF}_{(2,a^{\star})}(t)+R^{C}_{(p,a^{\star})}(t)
  \\
+2\sum_{j,\ell=1}^N\int_{\R^d}\int_{\R^d}
  \Delta_xa^{\star}(x,y) \nabla_x\m_{\psi_j}(x)\cdot\nabla_y \m_{\psi_\ell}(y)\,dxdy.
  \nonumber
\end{align}
 \item (High regularity Morawetz interaction inequality)
\begin{align}\label{eq:intmor2bb}
 \ddot {\mathcal I}(t) \geq N^{C}_{(p,a^{\star})}(t)+N^{HF}_{(2,a^{\star})}(t)+R^{C}_{(p,a^{\star})}(t)
 \\
   -2\sum_{j,\ell=1}^N\int_{\R^d}\int_{\R^d}
 \Delta^2_xa^{\star}(x,y) \m_{\psi_j}(x) \m_{\psi_\ell}(y)\,dxdy.
 \nonumber
\end{align}
\end{itemize}
\end{proposition}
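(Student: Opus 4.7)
The plan is to recognize that this proposition is essentially a bookkeeping corollary of the computation already carried out in the proof of Lemma \ref{le:intmorNL}. My starting point is the splitting $\ddot{\mathcal{I}}(t) = \mathcal{II}_1(t) + \mathcal{II}_2(t)$ introduced in \eqref{eq:mo1}, together with the key nonnegativity $\mathcal{II}_2(t)\geq 0$ proven via the sum-of-squares representation \eqref{eq:equivPV4} that exploits the convexity of the radial profile $a$. The two claimed inequalities differ only in which of the two equivalent expressions for the Laplacian contribution to $\mathcal{II}_1(t)$ one decides to keep on the right-hand side.

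For the high regularity inequality \eqref{eq:intmor2bb}, I would begin from the identity \eqref{eq:2Main}, which writes
\[
\mathcal{II}_1(t) = -2\sum_{j,\ell=1}^N\int_{\R^d}\int_{\R^d}\Delta^2_xa^{\star}(x,y)\,\m_{\psi_j}(x)\m_{\psi_\ell}(y)\,dx\,dy + N^{C}_{(p,a^{\star})}(t)+N^{HF}_{(2,a^{\star})}(t)+R^{C}_{(p,a^{\star})}(t),
\]
and then discard the nonnegative term $\mathcal{II}_2(t)$. For the low regularity inequality \eqref{eq:intmor2aa}, I would instead perform the integration by parts \eqref{eq:equivPV} inside $\mathcal{II}_1(t)$, which replaces the bi-Laplacian term by
\[
2\sum_{j,\ell=1}^N\int_{\R^d}\int_{\R^d}\Delta_xa^{\star}(x,y)\,\nabla_x\m_{\psi_j}(x)\cdot\nabla_y\m_{\psi_\ell}(y)\,dx\,dy,
\]
and once more drop $\mathcal{II}_2(t)\geq 0$.

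I do not expect a genuine obstacle: the entire identity underlying these two bounds has already been assembled in Lemma \ref{le:intmorNL}, where the sharper bound \eqref{eq:intmorV2} was obtained by merging the mixed Hessian cross term with $\mathcal{II}_2(t)$ to form the stronger nonnegative quantity displayed at \eqref{eq:equivPVN}--\eqref{eq:equivPVN3}. For the present proposition one simply refrains from that last combination and keeps either the bi-Laplacian or the Laplacian-gradient cross term intact on the right-hand side. The one subtlety worth noting is the reliance on the symmetry $\partial_{x_i}a^{\star}=-\partial_{y_i}a^{\star}$ that makes the integration by parts \eqref{eq:equivPV} clean; this already appeared in the lemma and is inherited here. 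As in Lemma \ref{le:intmorNL}, one justifies the manipulations first for smooth solutions and then passes to general $(\psi_j)_{j=1}^N\in\mathcal C(\R,H^1(\R^d)^N)$ by the standard density argument.
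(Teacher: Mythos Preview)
Your proposal is correct and follows essentially the same approach as the paper: the paper's proof simply invokes \eqref{eq:equivPVNFin} (which is exactly the identity $\ddot{\mathcal I}(t)=\mathcal{II}_1(t)+\mathcal{II}_2(t)$ with $\mathcal{II}_1$ written in Laplacian--gradient form) together with \eqref{eq:equivPV} to pass between the two forms, and discards $\mathcal{II}_2(t)\geq 0$. The only cosmetic difference is the order in which the two inequalities are derived.
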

\begin{proof}
We will supply the proof in few lines. Here we shall make use of \eqref{eq:equivPVNFin} along with \eqref{eq:equivPV}, arriving to the inequalities \eqref{eq:intmor2aa} and \eqref{eq:intmor2bb}.
\end{proof}

A direct consequence of Lemma \ref{le:intmorNL} is that we can prove the following:

\begin{proposition}\label{dimg3}
  Assume $d\geq 1$, $p>2$
  and let $(\psi_j)^N_{j=1}\in\mathcal C(\R,H^1(\R^d)^N)$ be as in Proposition \ref{ConsLaw}. Then, selecting $a^{\star}(x,y)=|x-y|$, one has the global estimate
\begin{align}
\int_{\R}N^{C}_{(p,a^{\star})}(t)\,dt 
 \leq C\sum_{j=1}^N\|\psi_{j,0}\|^4_{H^1_x},
  \label{eq:stimaMain}
\end{align}
with $N^{C}_{(p,a^{\star})}(t)$ as in \eqref{eq:nonlinC}.
Moreover, let be $\Q_{\tilde x}^d(r)=\tilde x+[-r,r]^d$, with $r>0$ and $\tilde x\in\R^d$, one gets the following localized
estimates: for $d\geq2$,
\begin{align} \label{eq:stima0a}
 \sum_{j,k,\ell=1}^N\widetilde\lambda_{jk}\int_{\R}\sup_{\tilde x\in\R^d}\int_{(\Q_{\tilde x}^d(r))^3}|\psi_j(t,x)|^{p}| |\psi_\ell(t,y)|^{2}|\psi_k(t,z)|^{p}
\,dxdydzdt,
  \nonumber\\
 \leq C\sum_{j=1}^N\|\psi_{j,0}\|^4_{H^1_x},
  \end{align}
  where $ \widetilde\lambda_{jk}=4\lambda_{jk}(p-2)/p$ and $(\Q_{\tilde x}^d(r))^3=\Q_{\tilde x}^d(r)\times \Q_{\tilde x}^d(r)\times \Q_{\tilde x}^d(r)$;\\
for $d=1$, 
  \begin{align} \label{eq:stima0b}
\sum_{j, k, \ell=1}^N\widetilde\lambda_{jk}\int_{\R}\sup_{\tilde x\in\R}\int_{(\Q_{\tilde x}^1(r))^2}|\psi_j(t,x)|^{p}|\psi_\ell(t,x)|^{2}|\psi_k(t,z)|^{p}
 \,dx \,dz\,dt 
  \nonumber\\
 \leq C\sum_{j=1}^N\|\psi_{j,0}\|^4_{H^1_x},
  \end{align}
   with $(\Q_{\tilde x}^1(r))^2=\Q_{\tilde x}^1(r)\times \Q_{\tilde x}^1(r)$.
\end{proposition}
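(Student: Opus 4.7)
The plan is to apply the interaction Morawetz inequality of Lemma \ref{le:intmorNL} with the canonical convex radial weight $a^{\star}(x,y)=|x-y|$. Since the hypothesis $p>2$ forces $\beta=0$, the Hartree-Fock contribution $N^{HF}_{(2,a^{\star})}$ vanishes identically, so \eqref{eq:intmorV2} reduces to $N^{C}_{(p,a^{\star})}(t)+R^{C}_{(p,a^{\star})}(t)\leq \ddot{\mathcal I}(t)$. The crucial preliminary step is to prove that the remainder $R^{C}_{(p,a^{\star})}(t)$ is non-negative. I would rewrite it explicitly using $\nabla_x a^{\star}=(x-y)/|x-y|$ together with
\begin{align*}
\nabla_x\bigl[|x|^{-(d-\gamma_1)}*|\psi_k|^p\bigr](x)=-(d-\gamma_1)\int|\psi_k(z)|^p\,\frac{x-z}{|x-z|^{d-\gamma_1+2}}\,dz.
\end{align*}
After symmetrizing the resulting triple integral under the swap $(x,\psi_j)\leftrightarrow(z,\psi_k)$, which is legitimate since the coupling matrix $(\lambda_{jk})$ is symmetric (as required by the conservation of \eqref{eq:energy}), the integrand becomes a positive multiple of
\begin{align*}
\frac{x-z}{|x-z|^{d-\gamma_1+2}}\cdot\Big(\frac{x-y}{|x-y|}-\frac{z-y}{|z-y|}\Big),
\end{align*}
which is pointwise non-negative by the convexity of $w\mapsto|w-y|$ (equivalently, monotonicity of its gradient). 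Hence $R^{C}\geq 0$ and one obtains $N^{C}(t)\leq \ddot{\mathcal I}(t)$.

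Integrating this inequality over $\R$ gives $\int_{\R} N^{C}(t)\,dt\leq 2\sup_{t\in\R}|\dot{\mathcal I}(t)|$. From \eqref{eq:intmorV} and $|\nabla_x a^{\star}|\equiv 1$, Cauchy--Schwarz yields
$|\dot{\mathcal I}(t)|\lesssim \sum_{j,\ell}\|\psi_j(t)\|_{L^2_x}\|\nabla\psi_j(t)\|_{L^2_x}\|\psi_\ell(t)\|_{L^2_x}^2$, and the conservation laws of Proposition \ref{ConsLaw} bound this uniformly in $t$ by $C\sum_{j=1}^N\|\psi_{j,0}\|_{H^1_x}^4$. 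This proves \eqref{eq:stimaMain}.

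For the localized estimates I would exploit the explicit form of $\Delta_x a^{\star}$. When $d\geq 2$, $\Delta_x|x-y|=(d-1)/|x-y|$ away from the diagonal, so $N^{C}(t)$ equals a triple integral whose kernel $|x-y|^{-1}|x-z|^{-(d-\gamma_1)}$ is bounded below by $C_d r^{-(d+1-\gamma_1)}$ on $(\Q^d_{\tilde x}(r))^3$; taking the supremum in $\tilde x\in\R^d$, integrating in $t$, and applying \eqref{eq:stimaMain} yields \eqref{eq:stima0a}. For $d=1$, $\Delta_x|x-y|=2\delta(x-y)$ collapses $N^{C}$ to a double integral with kernel $|x-z|^{-(1-\gamma_1)}$, which is at least $(2r)^{-(1-\gamma_1)}$ on $\Q^1_{\tilde x}(r)\times\Q^1_{\tilde x}(r)$, producing \eqref{eq:stima0b} by the same scheme.

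The main obstacle I expect is the sign of $R^{C}$: without a symmetric coupling matrix the symmetrization step breaks down and one would have to split $(\lambda_{jk})$ into symmetric and antisymmetric parts, the latter requiring an independent cancellation mechanism. A secondary technical subtlety is that $a^{\star}(x,y)=|x-y|$ is not $C^{2}$ on the diagonal, so rigorously the Morawetz computation must first be performed for a smooth convex approximation (e.g.\ $\sqrt{|x-y|^{2}+\varepsilon}$) and the limit $\varepsilon\to 0^{+}$ taken, as is standard in the interaction Morawetz literature.
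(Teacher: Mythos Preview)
Your proposal is correct and follows essentially the same route as the paper: both drop $N^{HF}$ via $\beta=0$, show $R^{C}\geq 0$ by the same symmetrization in $(x,j)\leftrightarrow(z,k)$ and the same monotonicity/convexity inequality for $|x-y|$, then integrate $N^{C}\leq \ddot{\mathcal I}$ and bound $\sup_t|\dot{\mathcal I}|$ via \eqref{eq:intmorV} and the conservation laws; the localized bounds are obtained identically from the explicit formulas for $\Delta_x|x-y|$ and the trivial lower bound on the kernels over cubes. Your remarks on the symmetry of $(\lambda_{jk})$ and on mollifying $a^{\star}$ are apt technical points that the paper leaves implicit.
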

 
\begin{proof} We will use, there, the interaction inequality \eqref{eq:intmorV2}  with $N^{HF}_{(2,a^{\star})}(t)= 0$ because of $\beta=0$. Let us start by handling \eqref{eq:nonlinRC}. Namely, by means of
\begin{equation}\label{eq.deltaMor}
\nabla_xa^{\star}(x,y)=\frac{x-y}{|x-y|},
\end{equation}
and inspired by \cite{MXZ}, we can write,
\begin{align}  \label{eq:nonlinRC2}
R^{C}_{(p,|x-y|)}(t)=
\\
\sum_{\substack{j,k,\ell=1}}^N\lambda_{jk}^*\int_{\R^d}\int_{\R^d}\int_{\R^d}
 \frac{(x-y)\cdot(x-z)|\psi_j(x)|^{p}| \psi_k(z)|^{p}}{|x-y||x-z|^{d-\gamma_1+2}} m_{\psi_\ell}(y) \,dxdydz,
 \nonumber\\
=\frac 12\sum_{\substack{j,k=1}}^N\lambda_{jk}^*\int_{\R^d}\int_{\R^d}
 \frac{1}{|x-z|^{d-\gamma_1+2}}|\psi_j(x)|^{p}| \psi_k(z)|^{p}K(x,z)\,dxdz,
 \nonumber
\end{align}
with $\lambda_{jk}^*=8p\lambda_{jk}(d-\gamma_1)/d$ and where
\begin{equation}\label{eq.kernel}
K(x,z)=(x-z)\cdot \sum_{\substack{\ell=1}}^N\int_{\R^d}m_{\psi_\ell}(y)\round{\frac{x-y}{|x-y|}-\frac{z-y}{|z-y|}}\, dy.
\end{equation}
Then, the elementary inequality 
\begin{align*}
(x-z) \cdot \round{\frac{x-y}{|x-y|}-\frac{z-y}{|z-y|}}
\\
=\round{|x-y||z-y|-(x-y)\cdot(z-y)}\round{\frac{|x-y|+|z-y|}{|x-y||z-y|}}\geq 0,
\end{align*}
bears to
\begin{align}\label{eq.liminf}
\inf_{(x,y)\in\R^d\times\R^d }K(x,z)\geq 0.
\end{align}
By combining now the previous \eqref{eq.liminf} with \eqref{eq:nonlinRC2} we obtain that $R^{C}_{(p,a^{\star})}(t)\geq 0$ for any $t\in\R$. Then we achieved, at this stage, the following pointwise (in time) estimate
$$
 N^{C}_{(p,a^{\star})}(t)\leq \ddot {\mathcal I}(t),
$$
which, after an integration w.r.t. time variable over the interval $[t_1, t_2]\subseteq \R$ with $t_1, t_2\in \R$, becomes 
 \begin{align}
\int_{t_1}^{t_2}N^{C}_{(p,|x-y|)}(t)\,dt 
\lesssim \sup_{t\in[t_1, t_2]} |\dot{\mathcal{I}}(t)|.
  \label{eq:stimaMorG1}
\end{align}
We have also the following
\begin{align}\label{eq:Mom}
\sup_{t\in[t_1, t_2]} |\dot{\mathcal{I}}(t)|\leq 2 \sup_{t\in[t_1, t_2]}\sum_{j,\ell=1}^N \left|\int_{\R^d}\int_{\R^d}\je_{\psi_j}(t,x)\cdot\nabla_xa^{\star}(x,y)\m_{\psi_\ell}(t,y)\,dxdy\right|
\nonumber\\
\lesssim\sup_{t\in[t_1, t_2]}\sum_{j=1}^N\|\psi_j(t)\|^4_{H^1_x}
\lesssim\sum_{j=1}^N\|\psi_{j,0}\|^4_{H^1_x}<\infty,
\end{align}
for the reason that the $H^1_x$-norm of the solution is bounded according to the conservation laws
\eqref{eq:macons},and \eqref{eq:enecons}. From the estimates \eqref{eq:Mom}, \eqref{eq:stimaMorG1} and allowing $t_1\to-\infty,  t_2\to+\infty$ we finally get \eqref{eq:stimaMain} which displays, after recalling that
\begin{equation*}
\Delta_x|x-y|=
\begin{cases}
\frac{d-1}{|x-y|}  \ \ \  \  \  \ \ \  \text{if} \ \ \ d\geq2,\\
\\
2\pi\delta_{x=y} \ \  \,  \, \,  \, \, \, \text{if} \ \ \ d=1,
\end{cases}
\end{equation*}
as
\begin{align}  \label{eq:nonlinC23}
\sum_{\substack{j,k,\ell=1}}^N\widetilde\lambda_{jk}\int_{\R}\int_{\R^{3d}}
\frac{d-1}{|x-y||x-z|^{d-\gamma_1}}|\psi_j(x)|^{p}|\psi_\ell(y)|^2| \psi_k(z)|^{p}\,dxdydz,
 \nonumber\\
\lesssim \sum_{j=1}^N\|\psi_{j,0}\|^4_{H^1_x},
\end{align}
with $\R^{3d}=\R^d\times\R^d\times\R^d$, for $d\geq 2$ and
\begin{align}  \label{eq:nonlinC21}
\sum_{\substack{j,k,\ell=1}}^N\widetilde\lambda_{jk}\int_{\R}\int_{\R^{2}}
\frac{1}{|x-z|^{1-\gamma_1}}|\psi_j(x)|^{p}|\psi_\ell(x)|^2| \psi_k(z)|^{p}\,dxdz,
\nonumber\\
\lesssim \sum_{j=1}^N\|\psi_{j,0}\|^4_{H^1_x},
 \end{align}
for $d=1$. We are in position to go over the proof of \eqref{eq:stima0a} and \eqref{eq:stima0b}. We notice that, for any $\tilde x\in \R^d$,

\begin{equation}\label{eq.lbound}
\inf_{x,y,z\in\Q_{\tilde x}^d(r)}\round{\frac{1}{|x-y|}, \frac{1}{|z-y|}}=\inf_{x,y,z\in\Q_0^d(r)}\round{\frac{1}{|x-y|}, \frac{1}{|z-y|}}>0,
\end{equation}
as an outcome, we can bound the l.h.s. of \eqref{eq:nonlinC23} as
\begin{align}\label{eq:nonlinC23a}
\sum_{j,k,\ell=1}^N\widetilde\lambda_{jk}\int_{\R^d}\int_{\R^{3d}}\frac{d-1}{|x-y||x-z|^{d-\gamma_1}}|\psi_j(x)|^{p}|\psi_\ell(y)|^2| \psi_k(z)|^{p}\,dxdydzdt
\nonumber\\
\gtrsim  \sum_{j, k,\ell=1}^N\widetilde\lambda_{jk}\int_{\R}\sup_{\tilde x\in\R^d}\int_{(\Q_{\tilde x}^d(r))^3}|\psi_j(t,x)|^{p}| |\psi_\ell(t,y)|^{2}|\psi_k(t,z)|^{p}
\,dxdydzdt.
\end{align}
Then the previous \eqref{eq:nonlinC23} and \eqref{eq:nonlinC23a}  guarantee that the estimate \eqref{eq:stima0a} holds.
In a similar way we can manage the l.h.s of \eqref{eq:nonlinC21}. To be specific we have, by utilizing again \eqref{eq.lbound}, that
\begin{align}  \label{eq:nonlinC21a}
\sum_{j,k,\ell=1}^N\widetilde\lambda_{jk}\int_{\R}\int_{\R^{2}}\frac{d-1}{|x-y||x-z|^{1-\gamma_1}}|\psi_j(x)|^{p}|\psi_\ell(x)|^2| \psi_k(z)|^{p}\,dxdzdt
\nonumber\\
\gtrsim  \sum_{j, k, \ell=1}^N\widetilde\lambda_{jk}\int_{\R}\sup_{\tilde x\in\R}\int_{(\Q_{\tilde x}^1(r))^2}|\psi_j(t,x)|^{p}| |\psi_\ell(t,x)|^{2}|\psi_k(t,z)|^{p}\,dxdzdt,
\end{align}
The above  \eqref{eq:nonlinC21} and \eqref{eq:nonlinC21a} give the  way to  \eqref{eq:stima0b}. The proof of the proposition is finally completed.
\end{proof}

In addition we get also the following result for the pure HF:

\begin{proposition}\label{dimHF3}
Assume $d\geq3$, $p=2$ and let $(\psi_j)_{i=1}^N\in\mathcal C(\R,H^1(\R^n)^N)$  be as in Proposition \ref{ConsLaw}. 
  Then we have, if one chooses $a^{\star}(x,y)=|x-y|$,
  \begin{align}
-\sum_{j,\ell=1}^N \int_{\R}\int_{\R^d}\int_{\R^d}\Delta^2_xa^{\star}(x,y)|\psi_j(t,x)|^{2}|\psi_\ell(t,y)|^{2}\,dxdydt 
 \leq C\sum_{i=1}^N\|\psi_{i,0}\|^4_{H^1_x}.
  \label{eq:stimaMain2}
\end{align}
Let be $\Q_{\tilde x}^d(r)=\tilde x+[-r,r]^d$, with $r>0$ and $\tilde x\in\R^d$, one gets the following
estimates:
  \begin{itemize}
 \item for $d=3$
 \begin{align} \label{eq:stima1}
  \sum_{j=1}^N \int_{\R}\sup_{\tilde x\in \R^3}\int_{\Q_{\tilde x}^3(r)}|\psi_j(t,x)|^{4}\,dxdt \leq C\sum_{j=1}^N\|\psi_{j,0}\|^4_{H^1_x};
  \end{align}
   \item for $d\geq4$
    \begin{align} \label{eq:stima2}
  \sum_{j,\ell=1}^N   \int_{\R}\sup_{\tilde x\in \R^d}\int_{(\Q_{\tilde x}^d(r))^2}|\psi_j(t,x)|^{2}|\psi_\ell(t,y)|^2\,dxdydt
 \leq C\sum_{j=1}^N\|\psi_{j,0}\|^4_{H^1_x},
    \end{align}
     with $(\Q_{\tilde x}^d(r))^2=\Q_{\tilde x}^d(r)\times \Q_{\tilde x}^d(r)$.
\end{itemize}
 \end{proposition}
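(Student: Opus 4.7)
I would apply the high-regularity bilinear Morawetz inequality \eqref{eq:intmor2bb} from Proposition \ref{prop:intmorbil} with the convex radial weight $a^{\star}(x,y) = |x-y|$. Since $p = 2$, the coefficient $\widetilde\lambda_{jk} = 4\lambda_{jk}(p-2)/p$ is identically zero, so $N^{C}_{(2,a^{\star})}(t)\equiv 0$. Moreover, the kernel analysis carried out in the proof of Proposition \ref{dimg3} (equations \eqref{eq.kernel}--\eqref{eq.liminf}) gives $R^{C}_{(2,a^{\star})}(t) \geq 0$ pointwise in $t$; it may therefore be dropped from the right-hand side. What remains is
\begin{equation*}
-2\sum_{j,\ell=1}^N\int_{\R^d}\int_{\R^d} \Delta^2_x a^{\star}(x,y)\, m_{\psi_j}(x) m_{\psi_\ell}(y)\,dxdy \leq \ddot{\mathcal I}(t) - N^{HF}_{(2,a^{\star})}(t).
\end{equation*}

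\textbf{Controlling the right-hand side.} Integrating this inequality over $[t_1,t_2]$ and letting $t_1 \to -\infty$, $t_2 \to +\infty$, the double time integral of $\ddot{\mathcal I}$ collapses to a boundary term estimated via \eqref{eq:intmorV} by $2\sup_{t\in\R}|\dot{\mathcal I}(t)|$. A Cauchy-Schwarz argument together with the conservation laws \eqref{eq:macons} and \eqref{eq:enecons}, exactly as in \eqref{eq:Mom}, yields $\sup_t|\dot{\mathcal I}(t)| \lesssim \sum_{j=1}^N \|\psi_{j,0}\|^4_{H^1_x}$.

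\textbf{Main obstacle: the exchange term.} The delicate point is to control $\int_{\R} N^{HF}_{(2,a^{\star})}(t)\,dt$ from below. The cleanest route is to show that $N^{HF}_{(2,a^{\star})}(t)\geq 0$ pointwise in $t$, by mimicking the kernel trick used for $R^{C}$: substitute $\nabla_x a^{\star} = (x-y)/|x-y|$, expand the convolution in $F(x,\psi_j,\overline\psi_j,\psi_k,\overline\psi_k)$ to arrive at a triple integral, and symmetrize in the particle labels $(j,k)$ so that the remaining kernel can be written in the manifestly non-negative form
\begin{equation*}
(x-z)\cdot\Bigl(\tfrac{x-y}{|x-y|} - \tfrac{z-y}{|z-y|}\Bigr) \geq 0,
\end{equation*}
now acting on densities built from $\psi_j,\overline\psi_j,\psi_k,\overline\psi_k$. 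The direct term $[|x|^{-(d-\gamma_2)}\ast|\psi_k|^2]|\psi_j|^2$ fits this scheme verbatim; the Pauli exchange piece $[|x|^{-(d-\gamma_2)}\ast \bar\psi_k\psi_j]\psi_k\bar\psi_j$ is the algebraically trickier part, since the relevant positivity there is really a Cauchy-Schwarz statement for a quadratic form with complex kernel. If a fully pointwise sign is not attainable, one can alternatively estimate $\bigl|\int_\R N^{HF}(t)\,dt\bigr|$ directly via Hardy-Rellich bounds, Sobolev embedding and the conservation laws, arriving at the same $\lesssim \sum_j\|\psi_{j,0}\|_{H^1_x}^4$. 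Either way, \eqref{eq:stimaMain2} follows.

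\textbf{Localized estimates via explicit $\Delta^2|x-y|$.} The proof concludes by evaluating the bi-Laplacian of the modulus. In $d=3$, a direct distributional computation gives $-\Delta^2|x-y| = 8\pi\,\delta(x-y)$, so \eqref{eq:stimaMain2} reduces to a bound on $\sum_j \int_\R\!\int_{\R^3}|\psi_j(t,x)|^4\,dxdt$; estimating $\sup_{\tilde x}\int_{\Q^3_{\tilde x}(r)}|\psi_j|^4\,dx$ by $\int_{\R^3}|\psi_j|^4\,dx$ and then integrating in $t$ yields \eqref{eq:stima1}. For $d\geq 4$, a pointwise computation gives $-\Delta^2|x-y| = (d-1)(d-3)\,|x-y|^{-3}\geq 0$; restricting the spatial integral to $\Q^d_{\tilde x}(r)\times \Q^d_{\tilde x}(r)$, using the uniform lower bound $|x-y|^{-3}\geq (2r\sqrt{d})^{-3}$ on this set, and then pulling $\sup_{\tilde x}$ inside the time integral (which is legal because the cube integral is dominated by the full-space integral for every $\tilde x$), produces \eqref{eq:stima2}. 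The principal hurdle throughout remains the sign of $N^{HF}$; everything else is a rearrangement of ingredients already developed in Section \ref{InterMor}.
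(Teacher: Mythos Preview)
Your approach is essentially the paper's, and your hedging on $N^{HF}_{(2,a^{\star})}\geq 0$ is unnecessary: the pointwise sign \emph{does} hold, and the clean way to see it is to sum over the particle labels \emph{before} separating direct and exchange pieces. Writing $\eta(x,z)=\sum_{j=1}^N\psi_j(x)\overline{\psi_j(z)}$ and $\eta(x)=\eta(x,x)$, one finds after the same symmetrization that produced $K(x,z)$ in \eqref{eq.kernel} that
\[
N^{HF}_{(2,|x-y|)}(t)=2\beta(d-\gamma_2)\int_{\R^d}\int_{\R^d}\frac{\eta(x)\eta(z)-|\eta(x,z)|^2}{|x-z|^{d-\gamma_2+2}}\,K(x,z)\,dxdz,
\]
and Cauchy--Schwarz in $\C^N$ gives $|\eta(x,z)|^2\leq\eta(x)\eta(z)$; combined with $K(x,z)\geq 0$ from \eqref{eq.liminf}, this yields $N^{HF}\geq 0$ pointwise and your fallback Hardy--Rellich argument is not needed. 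Everything else in your sketch---dropping $N^{C}$ and $R^{C}$, integrating \eqref{eq:intmor2bb} in time, bounding $\sup_t|\dot{\mathcal I}|$ via \eqref{eq:Mom}, and then reading off the localized estimates from the explicit bi-Laplacian plus the cube lower bound \eqref{eq.lbound}---matches the paper exactly.
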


\begin{proof}
We notice from the steps above that in the case $p=2$, the term $N^{C}_{(2,|x-y|)}(t)$ will vanish and $R^{C}_{(2,|x-y|)}(t)\geq 0$. We move now on the term \eqref{eq:nonlinHF} having the following
\begin{align}  \label{eq:nonlinHF2}
N^{HF}_{(2,|x-y|)}(t)=
\\
2\beta(d-\gamma_2)\int_{\R^d}\int_{\R^d}
 \frac{1}{|x-z|^{d-\gamma_2+2}}\round{\eta(x) \eta(z)-|\eta(x,z)|^2 }K(x,z)\,dxdy,
 \nonumber
\end{align}
for $K(x,z)$ as in \eqref{eq.kernel} and where we indicated by
\begin{align}\label{eq.HFnonli}
\eta(x,z)=\sum_{j=1}^N \psi_j(x)\bar \psi_j(z), \, \, \,  \, \, \,\eta(x)=\eta(x,x).
\end{align}
In addition we infer, by an use of the Cauchy-Schwartz inequality, the bound $|\eta(x,z)|^2\leq \eta(x)\eta(z)$, for any $x,y\in\R^d$. This observation,  jointly again with \eqref{eq.liminf}, implies $N^{HF}_{(2,|x-y|}(t)\geq 0$ for any $t\in\R$. Hence, by applying the high regularity interaction inequality \eqref{eq:intmor2bb} and then arguing as in \eqref{eq:stimaMorG1} and \eqref{eq:Mom}, one can
easily attain the \eqref{eq:stimaMain2}, which reads, by recalling that
\begin{equation*}
\Delta^2_x|x-y|=-\frac{(d-1)(d-3)}{|x-y|^3},
\qquad
\Delta^2_x|x-y|=-4\pi\delta_{x=y}\leq0,
\end{equation*}
as
 \begin{align*}
\sum_{j,\ell=1}^N \int_{\R}\int_{\R^d}\int_{\R^d}\frac{|\psi_j(t,x)|^{2}|\psi_\ell(t,y)|^{2}}{|x-y|^3}\,dxdt 
 \lesssim \sum_{j=1}^N\|\psi_{\j,0}\|^4_{H^1_x},
  \end{align*}
for $d\geq 4$ and
 \begin{align*}
\sum_{j,\ell=1}^N \int_{\R}\int_{\R^3}|\psi_j(t,x)|^{2}|\psi_\ell(t,x)|^{2}\,dxdt 
 \lesssim \sum_{j=1}^N\|\psi_{j,0}\|^4_{H^1_x},
  \end{align*}
for $d=3$. The proofs of \eqref{eq:stima1} and \eqref{eq:stima2} are exactly the same as in Proposition \ref{dimg3}, 
considering also the bound \eqref{eq.lbound}.
\end{proof}

By the low and high regularity Morawetz interaction inequalities \eqref{eq:intmor2aa}, \eqref{eq:intmor2bb}, the  Propositions \ref{dimg3} and \ref{dimHF3} and taking into account that $N^{C}_{(p,a^{\star})}(t)\geq 0$, one arrives at the following corollary, where some new linear correlation-type estimates associated to \eqref{eq:nls} are achieved. We have thus:

\begin{corollary}\label{corMain}
Let $(\psi_j)_{j=1}^N\in\mathcal C(\R,H^1(\R^n)^N)$  be as in Proposition \ref{ConsLaw}. Then one has, assuming $d\geq1$ and $p>2$ such that \eqref{eq:base} holds,
    \begin{align*}
  \sum^N_{j=1}\|(-\Delta)^{\frac{1-d}{4}}\nabla |\psi_j(t, x)|^2\|^2_{L^2((t_1, t_2);L_x^2)}\lesssim  \sup_{t\in[t_1, t_2]} |\mathcal{ \dot I}(t)|.
  \end{align*}
In particular the following estimates are valid with $p\geq 2$, $\beta\geq 0$ and $\lambda_{jk}\geq 0$:
\begin{itemize}
\item for $d=3$, 
\begin{align*}
  \sum^N_{j=1}\|\psi_j(t, x)\|^4_{L^4((t_1, t_2);L_x^4)}\lesssim  \sup_{t\in[t_1, t_2]} |\mathcal{ \dot I}(t)|;
  \end{align*}
  \item  for $d\geq4$,
  \begin{align*}
  \sum^N_{j=1}\|(-\Delta)^{\frac{3-d}{4}}|\psi_j(t, x)|^2\|^2_{L^2((t_1, t_2);L_x^2)}\lesssim  \sup_{t\in[t_1, t_2]} |\mathcal{ \dot I}(t)|.
  \end{align*}
   \end{itemize}
\end{corollary}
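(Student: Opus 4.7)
The plan is to specialize the Morawetz interaction inequalities from Proposition \ref{prop:intmorbil} to the weight $a^{\star}(x,y)=|x-y|$, discard the non-negative nonlinear contributions, and recognize the surviving purely quadratic term via Plancherel as the stated fractional Sobolev norm. I expect this to follow almost verbatim from the computations already carried out in Propositions \ref{dimg3} and \ref{dimHF3}, only now reading off the linear piece of $\ddot{\mathcal I}(t)$ rather than the nonlinear one.

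Concretely, for the first estimate (with $p>2$, so $\beta=0$), I would start from the low-regularity inequality \eqref{eq:intmor2aa}. The proof of Proposition \ref{dimg3} already shows that, with $a^{\star}=|x-y|$, both $N^{C}_{(p,a^{\star})}(t)\ge 0$ and $R^{C}_{(p,a^{\star})}(t)\ge 0$ pointwise in $t$. Dropping them, integrating over $[t_{1},t_{2}]$, and controlling $\int_{t_{1}}^{t_{2}}\ddot{\mathcal I}(t)\,dt$ by $2\sup_{t\in[t_{1},t_{2}]}|\dot{\mathcal I}(t)|$ (exactly as in \eqref{eq:stimaMorG1}--\eqref{eq:Mom}) yields
\begin{equation*}
2\sum_{j,\ell=1}^N\int_{t_{1}}^{t_{2}}\!\!\int_{\R^{2d}}\Delta_x|x-y|\,\nabla_x m_{\psi_j}(t,x)\cdot\nabla_y m_{\psi_\ell}(t,y)\,dx\,dy\,dt\le 2\sup_{t\in[t_{1},t_{2}]}|\dot{\mathcal I}(t)|.
\end{equation*}
Substituting $\Delta_x|x-y|=(d-1)/|x-y|$ (respectively $2\pi\delta_{x=y}$ when $d=1$), integrating by parts once in $x$ and once in $y$, and applying the Riesz representation $\int|x-y|^{-1}m_{\psi_\ell}(y)\,dy=c_{d}(-\Delta)^{-(d-1)/2}m_{\psi_\ell}(x)$, Plancherel converts the $(j,\ell)$-summand into
\begin{equation*}
c\int_{\R^d}|\xi|^{3-d}\widehat{m_{\psi_j}}(\xi)\,\overline{\widehat{m_{\psi_\ell}}(\xi)}\,d\xi=c\,\langle (-\Delta)^{(1-d)/4}\nabla m_{\psi_j},\,(-\Delta)^{(1-d)/4}\nabla m_{\psi_\ell}\rangle_{L_x^2},
\end{equation*}
using the elementary identity $\|(-\Delta)^{(1-d)/4}\nabla f\|_{L^2}^{2}=\int|\xi|^{3-d}|\hat f|^{2}\,d\xi$. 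Since for $d\ge 3$ this multiplier corresponds to the \emph{positive} convolution kernel $|x-y|^{-(d-3)}$ (interpreted as the Dirac mass when $d=3$), each off-diagonal pairing against the non-negative densities $m_{\psi_j},m_{\psi_\ell}\ge 0$ is non-negative, so the diagonal sum $\sum_{j}\|(-\Delta)^{(1-d)/4}\nabla m_{\psi_j}\|^2$ is dominated by the full bilinear expression, which is the claimed bound.

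The two subsequent bullets are then instant reformulations of the same inequality via $\|(-\Delta)^{(1-d)/4}\nabla f\|_{L^2}=\|(-\Delta)^{(3-d)/4}f\|_{L^2}$: for $d=3$ the right-hand side is simply $\|m_{\psi_j}\|_{L^2}=\|\psi_j\|_{L^4}^{2}$, producing the $L^{4}_{t,x}$ bound, while for $d\ge 4$ it already matches the stated $(-\Delta)^{(3-d)/4}$-norm. The admitted case $p=2$ (with $d\ge 3$) is treated in exactly the same way but starting from the high-regularity inequality \eqref{eq:intmor2bb} instead; Proposition \ref{dimHF3} guarantees $R^{C}_{(2,a^{\star})}\ge 0$ and $N^{HF}_{(2,a^{\star})}\ge 0$, and the distributional formulas $\Delta_x^{2}|x-y|=-4\pi\delta_{x=y}$ ($d=3$) and $\Delta_x^{2}|x-y|=-(d-1)(d-3)/|x-y|^{3}$ ($d\ge 4$) produce after Plancherel the same bilinear quantity.

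The step I expect to require the most care is the transition from the global bilinear sum $\sum_{j,\ell}$ to the diagonal sum $\sum_{j}$. For $d\ge 3$ this is painless because the kernel $|x-y|^{-(d-3)}$ is non-negative and $m_{\psi_j}\ge 0$, so every off-diagonal contribution is non-negative. For $d=1,2$ in the first bullet the same conclusion has to be reached through the positive semidefiniteness of the Fourier-multiplier bilinear form $B(f,g)=c\int|\xi|^{3-d}\hat f\,\overline{\hat g}\,d\xi$; this is the subtlest ingredient, but it suffices since the required estimate only asserts a bound for each individual $\|(-\Delta)^{(1-d)/4}\nabla m_{\psi_j}\|^{2}$, which is dominated by the diagonal of a positive semidefinite form.
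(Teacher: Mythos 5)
Your route is the one the paper intends: its own proof of this corollary is a single sentence citing \eqref{eq:intmor2aa}, \eqref{eq:intmor2bb} and Propositions \ref{dimg3}, \ref{dimHF3}, and what you write out (specialize to $a^{\star}=|x-y|$, drop the non-negative nonlinear contributions, integrate in time against $\sup_{[t_1,t_2]}|\dot{\mathcal I}|$ exactly as in \eqref{eq:stimaMorG1}--\eqref{eq:Mom}, and identify the surviving quadratic term on the Fourier side) is precisely that argument made explicit. For $d\geq 3$ your proof is complete and correct: the multiplier $|\xi|^{3-d}$ corresponds to the non-negative kernel $c\,|x-y|^{-3}$ (a Dirac mass when $d=3$), so the off-diagonal pairings of the non-negative densities $m_{\psi_j}$ are non-negative and the diagonal sum is dominated by the full bilinear sum; this settles both bullet points and the first display in that range.

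The one genuine gap is your justification of the diagonal extraction for $d=1,2$. Positive semidefiniteness of $B(f,g)=c\int|\xi|^{3-d}\hat f\,\overline{\hat g}\,d\xi$ gives $B(f,f)\geq0$ but does \emph{not} give $B(m_j,m_\ell)\geq0$ for $j\neq\ell$, hence does not give $\sum_jB(m_j,m_j)\leq\sum_{j,\ell}B(m_j,m_\ell)=B\bigl(\sum_jm_j,\sum_jm_j\bigr)$; for that you need the convolution kernel of $|\xi|^{3-d}$ to be pointwise non-negative, which is exactly what fails below dimension three. Concretely, for $d=1$ the bilinear term reduces to $c\,\|\partial_x\sum_jm_{\psi_j}\|^2_{L^2}$, and two non-negative bumps with overlapping supports easily satisfy $\int m_1'm_2'<0$, so $\|\partial_x(m_1+m_2)\|^2$ can be strictly smaller than $\|\partial_xm_1\|^2+\|\partial_xm_2\|^2$; the same obstruction occurs for $d=2$. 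Thus \eqref{eq:intmor2aa} only controls $\|(-\Delta)^{\frac{1-d}{4}}\nabla\sum_jm_{\psi_j}\|^2_{L^2L^2}$ in low dimension, not the stated diagonal sum. To recover the statement as written for $d=1,2$ you should run the whole argument on each diagonal interaction functional $\mathcal I_{jj}(t)=\int\int|x-y|\,m_{\psi_j}(x)\,m_{\psi_j}(y)\,dxdy$ separately (the positivity of $N^{C}$ and $R^{C}$ holds term by term) and then sum over $j$; the resulting right-hand side $\sum_j\sup_t|\dot{\mathcal I}_{jj}(t)|$ is bounded by $\sum_j\|\psi_{j,0}\|^4_{H^1_x}$ exactly as in \eqref{eq:Mom}, which is how the corollary is used later.
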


\section{The decay of solutions to \eqref{eq:nls}}\label{MainThm1}

 Our main purpose in this section is to exhibit some decaying properties of the solution to  \eqref{eq:nls} which is a essential property for the study of the scattering phenomena. With the aim of doing that, we present thus the proof of the of Theorem \ref{decay} and of the associated property \eqref{eq:decayHF} in Corollary \ref{thm:mainNLCH2b}.

\begin{proof}[\bf{Proof of Theorem \eqref{decay}.}]
Let us set $u(t,x)=(\psi_j(t,x))_{j=1}^N,$ utilizing both notations where it is needed. We split the proof in three different parts: \\
{\bf Case $p>2, d\geq2.$} 
It is sufficient to prove the property \eqref{eq:decay1} for a suitable $2<r<2d/(d-2)$ (for $2<r<+\infty$, if $d=2$), since the thesis for the general case 
can be acquired by the conservation of mass \eqref{eq:macons}, the kinetic energy 
\eqref{eq:enecons} and then by interpolation. Let us select $r=(2d+8)/(d+2)$, we need to prove then 
\begin{equation}\label{eq:potenergy2}
\lim_{t\rightarrow \pm \infty} \|\psi(t)\|_{\Lin^{\frac{2d+8}{d+2}}_x}=0.
\end{equation}
We treat only the case $t\rightarrow \infty$,
the case $t\rightarrow -\infty$ can be dealt analogously. 
Proceeding now by absurd as in \cite{CT} (we  also to \cite{Vis}), we assume that there exists a sequence
$\{t_n\}$ with $t_n \to +\infty$ and a $\delta_0>0$
\begin{equation}\label{eq:sequencetime}
 \inf_n \|\psi(t_n, x)\|_{\Lin^{\frac{2d+8}{d+2}}_x}=\delta_0.
\end{equation}
Next we will make an use of the localized Gagliardo-Nirenberg inequality given in the Appendix \ref{appen} with $r=1$ and $\nu=2$:
\begin{equation}\label{eq:GNloc3}
\|\phi\|_{\Lin^{\frac{2d+8}{d+2}}_x}^{\frac{2d+8}{d+2}}\leq C \left(\sup_{\widetilde x\in \R^d} \|\phi\|_{\Lin^2(\Q^d_{\widetilde x}(1))}\right)^{\frac{2}{d+4}} 
\|\phi\|^{\frac {d+2}{d+4}}_{\Hin^1_x},
\end{equation}
where $\Q^d_{\widetilde x}(1)$ is the unit cube in $\R^d$ centered in $\widetilde x$.
By combining \eqref{eq:sequencetime}, \eqref{eq:GNloc3}, where we selected $\phi=u(t_n, x)$, with the bound $\|u(t_n, x)\|_{\Hin^1_x}<+\infty$, we notice that there exists $x_n \in \R^d$ and a $\varepsilon_0>0$ such that
\begin{equation}\label{eq:seqspac}
\inf_{n} \|u(t_n, x)\|_{\Lin^2(\Q^d_{x_n}(1))}=\varepsilon_0.
\end{equation}
We can assert now that there exists $t^*>0$ such that
\begin{equation}\label{eq:claMain1}
 \|u(t, x)\|_{\Lin^2(\Q^d_{x_n}(2))}\geq \varepsilon_0/2,
\end{equation}
for all  $t\in (t_n, t_n+ t^*)$ and where $\Q^d_{x_n}(2)$ denotes the cube in $\R^d$ with sidelenght $2$ centered at $x_n$.
Then \eqref{eq:claMain1} can be showed as follows.
Fix a cut-off function $\varphi(x)\in C^\infty_0(\R^d)$, so as
$\varphi(x)=1$ for $x\in \Q^d_{0}(1)$ and $\varphi(x)=0$ for $x\notin \Q^d_{0}(2)$.
Then by applying \eqref{eq:morV} where we choose
$a(x)=\varphi (x-x_n)$ we get
$$\left|\frac d{dt} \int_{\R^d} \varphi(x -x_n) |u(t, x)|^2 dx\right|\lesssim  \sup_t \|u(t,x)\|_{\Hin^1_x}^2.$$
Consequently, by \eqref{eq:enecons} and the fundamental theorem of calculus 
we deduce

\begin{equation}
\left|\int_{\R^d} \varphi(x -x_n) |u(\sigma, x)|^2 dx - \int_{\R^d} \varphi(x -x_n) |u(t, x)|^2 dx\right|\leq \widetilde C |t-\sigma|,
\end{equation}
for a $\widetilde C>0$ which does not depend on $n$. Hence if we choose $t=t_n$ we get
the elementary inequality 

\begin{equation}
\int_{\R^d} \varphi(x -x_n) |u(\sigma, x)|^2 dx\geq  \int_{\R^d} \varphi(x -x_n) |u(t_n, x)|^2 dx - \widetilde C|t_n-\sigma|,
\end{equation}
which implies, having in mind the support property of the function $\varphi$, 

\begin{equation}
\int_{\Q^d_{x_n}(2)} |u(\sigma, x)|^2 dx\geq  \int_{\Q^d_{x_n}(1)} |u(t_n, x)|^2 dx - \widetilde C|t_n-\sigma|.
\end{equation}

Hence \eqref{eq:claMain1} follows by an application of \eqref{eq:seqspac}, provided that we pick up $ t^*>0$
such that
$3\varepsilon_0^2 - 4 \widetilde C t^*>0$.
The inequality \eqref{eq:claMain1} is in contradiction with the Morawetz estimates \eqref{eq:stima0a}.
In fact, the lower bound \eqref{eq:claMain1} means that
\begin{equation}\label{eq:claim1}
\inf_{n}\round{\inf_{t\in (t_n, t_n+t^*)} \sum_{j=1}^N \|\psi_j(t)\|^{2}_{L^{2}_x(\Q^d_{x_n}(2))}} \gtrsim \varepsilon^{2}_0>0,
\end{equation}
with $t^*$ as above and the time intervals $(t_n, t_n+t^*)$ chosen to be disjoint. By H\"older inequality we attain also
\begin{equation}\label{eq:lowp1}
\inf_{n}\round{\inf_{t\in (t_n, t_n+t^*)} \sum_{j=1}^N \|\psi_j(t)\|^{p}_{L^{p}_x(\Q^d_{x_n}(2))}} \gtrsim \varepsilon^{2}_0>0.
\end{equation}
Thus we can formulate the following 
\begin{align} \label{eq:stima3}
\sum_{j, k, \ell=1}^N\widetilde\lambda_{jk}\int_{\R}\sup_{\tilde x\in\R^d}\int_{(\Q_{\tilde x}^d(2))^3}|\psi_j(t,x)|^{p}|\psi_\ell(t,y)|^{2}|\psi_k(t,z)|^{p}
 \,dxdydzdt 
   \nonumber \\
   \gtrsim \sum_{j, k, \ell=1}^N\widetilde\lambda_{jk}\int_{\R} \int_{(\Q_{x_n}^d(2))^3}|\psi_j(t,x)|^{p}|\psi_\ell(t,y)|^{2}|\psi_k(t,z)|^{p}
 \,dxdydzdt 
 	\nonumber\\
 \gtrsim   \sum_{j, k=1}^N\widetilde\lambda_{jk}  \sum_{n} \int_{t_n}^{t_n+t^*} \varepsilon^{6}_0\,dt\gtrsim \sum_{n} t^*\varepsilon^{6}_0\,dt=\infty,&
\end{align}
where in the last inequality  we employed \eqref{eq:claMain1} in combination with \eqref{eq:claim1} and \eqref{eq:lowp1}. This brings us to contradiction with  \eqref{eq:stima0a}. \\
{\bf Case $p>2, d=1$}.  It can be handled  in a similar manner, now by seeking for a $2<r<\infty$. By  an application of the H\"older inequality, one figures out the bound
\begin{equation} 
\begin{split}
  \sum_{j, k, \ell=1}^N\widetilde\lambda_{jk}\int_{\R}\sup_{\tilde x\in\R^1}\int_{(\Q_{\tilde x}^1(2))^2}|\psi_j(t,x)|^{p}|\psi_\ell(t,x)|^{2}|\psi_k(t,z)|^{p}
 \,dxdzdt   \\
  \gtrsim  \sum_{j,k=1}^N\widetilde\lambda_{jk} \sum_{n} \int_{t_n}^{t_n+t^*}\int_{\Q_{ x_n}^1(2)}\int_{\Q_{ x_n}^1(2)}|\psi_j(t,x)|^{p+2}|\psi_k(t,z)|^{p}
  \,dxdzdt
  \nonumber\\
   \gtrsim   \sum_{j,k=1}^N\widetilde\lambda_{jk}  \sum_{n} \int_{t_n}^{t_n+t^*} \varepsilon^{4}_0\,dt\gtrsim \sum_{n} t^*\varepsilon^{4}_0\,dt=\infty.
\end{split}
\end{equation}
Therefore, we can proceed as above, getting a contradiction with \eqref{eq:stima0b} instead. Lastly, the conservation law \eqref{eq:energy}, \eqref{eq:decay1} and the Gagliardo-Nirenberg inequality
$$
\|\psi_j(t)\|^4_{L^{\infty}_x}\lesssim \|\psi_j(t)\|^3_{L^{6}_x} \|\partial_x\psi_j(t)\|_{L^{2}_x}.
$$
ensure  
$$
\lim_{t\rightarrow  +\infty} \|\psi_j(t)\|_{L^\infty_x}=0,
$$
for any $j=1,\dots, N$. \\
{\bf Case $p=2, d\geq 3$}. We follow the same lines of the proof above. However, one can not use, at this level, the Proposition \ref{dimg3} because we are picking up $p=2$. Then we are forced to focus on the Proposition \ref{dimHF3}: for $d\geq 4$, we make use of \eqref{eq:claim1} attaining  
\begin{equation} 
\begin{split}
 \sum_{j, \ell=1}^N\int_{\R}\sup_{\tilde x\in\R^d}\int_{\Q_{\tilde x}^d(2)}\int_{\Q_{\tilde x}^d(2)}|\psi_j(t,x)|^{2}|\psi_\ell(t,y)|^{2}
 \,dxdydt   \\
  \gtrsim  \sum_{j,\ell=1}^N\sum_{n} \int_{t_n}^{t_n+t^*}\int_{\Q_{ x_n}^d(2)}\int_{\Q_{ x_n}^d(2)}|\psi_j(t,x)|^{2}|\psi_\ell(t,y)|^{2}
  \,dxdydt
  \nonumber\\
   \gtrsim   \sum_{n} \int_{t_n}^{t_n+t^*} \varepsilon^{4}_0\,dt= \sum_{n} t^*\varepsilon^{4}_0\,dt=\infty,
\end{split}
\end{equation}
which contradicts \eqref{eq:stima2}. In the same manner we can treat the case $d= 3$. In fact by H\"older inequality and \eqref{eq:claim1}, we arrive at
\begin{equation} 
\begin{split}
 \sum_{j=1}^N\int_{\R}\sup_{\tilde x\in\R^3}\int_{\Q_{\tilde x}^3(2)}|\psi_j(t,x)|^{4}
  \,dxdt   
  \gtrsim  \sum_{j=1}^N\sum_{n} \int_{t_n}^{t_n+t^*}\int_{\Q_{ x_n}^3(2)}|\psi_j(t,x)|^{2}
  \,dxdt
  \nonumber\\
   \gtrsim   \sum_{n} \int_{t_n}^{t_n+t^*} \varepsilon^{2}_0\,dt= \sum_{n} t^*\varepsilon^{2}_0\,dt=\infty,
\end{split}
\end{equation}
that is in contradiction with \eqref{eq:stima2}. Then the proof is now complete.
\end{proof}

 \section{Scattering for NLC and NLHF systems}\label{NLSscat}
 We carry out, along this section, the proof of Theorem \ref{thm:mainNLCH} and the corresponding
 scattering property \eqref{eq:scattering0} in Corollary \ref{thm:mainNLCH2b}.
Albeit these results are classic (we suggest \cite{Ca}, \cite{GiVel2} and references therein for additional reading), 
here we disclose them in a more general and self-contained form. We recall from \cite{KT}, also:
\begin{definition}\label{Sadm}
An exponent pair $(q, r)$ is Schr\"odinger-admissible if $2\leq q,r\leq \infty,$  $(q, r, d ) \neq (2,\infty, 2),$ and
\begin{align}\label{StrTV}
\frac 2q +\frac d{r}=\frac d2.
\end{align}
\end{definition} 

\begin{proposition}\label{Stri}
Let be two Schr\"odinger-admissible pairs $(q,r)$ and $(\widetilde q,\widetilde r)$. Then we have for $\kappa=0,1$ and the following estimates:
\begin{align}
\label{eq:200p1}
 \|  \nabla^{\kappa} e^{ -it\Delta_x} g\|_{L^q_t L^r_x} + \left \|\nabla^{\kappa}\int_0^t e^{- i (t-\tau) \Delta_x} G(\tau)
d\tau\right\|_{L^q_t L^r_x}&\\\nonumber
\leq C\round{\|\nabla^{\kappa} g\|_{ L^2_x}+\|\nabla^{\kappa} G\|_{L^{\widetilde q'}_t
L^{\widetilde r'}_x}}.&
 \end{align}
 \end{proposition}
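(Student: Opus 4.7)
The plan is to reduce \eqref{eq:200p1} to the classical Keel--Tao Strichartz theorem. First I will record the two primary bounds for the free propagator: unitarity $\|e^{-it\Delta_x}g\|_{L^2_x}=\|g\|_{L^2_x}$ and the dispersive decay $\|e^{-it\Delta_x}g\|_{L^\infty_x}\lesssim |t|^{-d/2}\|g\|_{L^1_x}$, the second of which follows from the explicit Gaussian convolution kernel of $e^{-it\Delta_x}$. Riesz--Thorin interpolation between these then yields the full decay estimate $\|e^{-it\Delta_x}g\|_{L^r_x}\lesssim |t|^{-d(1/2-1/r)}\|g\|_{L^{r'}_x}$ for every $2\leq r\leq \infty$, which is precisely the hypothesis feeding the abstract theorem in \cite{KT}.

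Second, the homogeneous half of \eqref{eq:200p1} with $\kappa=0$ is an application of the Keel--Tao theorem to the unitary family $U(t)=e^{-it\Delta_x}$ on $L^2(\R^d)$ with decay exponent $\sigma=d/2$: for every Schr\"odinger-admissible pair $(q,r)$ in the sense of Definition \ref{Sadm} (the one case excluded, $(q,r,d)=(2,\infty,2)$, being exactly the forbidden double-endpoint in that theorem) one obtains $\|e^{-it\Delta_x}g\|_{L^q_t L^r_x}\lesssim \|g\|_{L^2_x}$. The $TT^*$ companion of that argument simultaneously delivers, for any two admissible pairs $(q,r)$ and $(\widetilde q, \widetilde r)$, the untruncated bilinear estimate $\|\int_\R e^{-i(t-\tau)\Delta_x}G(\tau)\,d\tau\|_{L^q_t L^r_x}\lesssim \|G\|_{L^{\widetilde q'}_t L^{\widetilde r'}_x}$; the retarded Duhamel version in which $\int_\R$ is replaced by $\int_0^t$ is recovered either through the bilinear interpolation carried out directly in \cite{KT} at the double endpoint, or, away from it, through the Christ--Kiselev lemma in the range $q>\widetilde q'$.

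Third, the case $\kappa=1$ requires no new ingredient: since the operator $\nabla$ commutes with the Fourier multiplier $e^{-it\Delta_x}$ and with the Duhamel integral $\int_0^t e^{-i(t-\tau)\Delta_x}\,d\tau$, the inequality for $\kappa=1$ is just the inequality for $\kappa=0$ applied componentwise to $\nabla g$ and $\nabla G$. The only genuine subtlety in the scheme is the double-endpoint pair $(q,r)=(2,2d/(d-2))$ in dimensions $d\geq 3$, which forces the delicate bilinear interpolation of Keel--Tao; for any non-endpoint admissible pair the estimates would already follow from Hardy--Littlewood--Sobolev applied to the interpolated dispersive bound, together with Minkowski's inequality in time. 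Accordingly, the bulk of the proposition is a direct invocation of \cite{KT}, and I will present it as such.
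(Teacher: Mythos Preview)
Your proposal is correct and matches the paper's own treatment: the paper does not supply an independent proof of Proposition~\ref{Stri} but simply records it as recalled from \cite{KT}, which is exactly what you do, with the added (and accurate) remark that the case $\kappa=1$ follows by commuting $\nabla$ through the propagator. There is nothing to add.
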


We want  to prove Theorem \ref{thm:mainNLCH}, then we demand to gain the necessary space-time summability for the scattering. This is contained in the following:
 \begin{lemma}\label{StriNLSys}
 Assume $(\psi_j)_{j=1}^N\in\mathcal C(\R,\Hin^1_x)$ as in Theorem \ref{thm:mainNLCH}. Then we have
 \begin{align}
 (\psi_j)_{j=1}^N\in L^q(\R, \Win^{1,r}_x),
 \end{align}
for every Schr\"odinger-admissible pair $(q,r)$.
\end{lemma}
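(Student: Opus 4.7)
The overall strategy is the classical two-step scattering argument: establish $\psi_j \in L^{q_0}(\R;W^{1,r_0}_x)$ for one distinguished Strichartz-admissible pair, then propagate to every admissible pair by one final application of Proposition \ref{Stri}. The decisive input beyond linear Strichartz is the decay in $L^r_x$-norms furnished by Theorem \ref{decay}, which converts the conserved $\Hin^1_x$-bound into space-time smallness in a neighborhood of $r_0$.

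I fix $(q_0,r_0)=\left((2d+8)/d,\,(2d+8)/(d+2)\right)$, a Schr\"odinger-admissible pair lying strictly inside the decay range of Theorem \ref{decay} in every allowed dimension. From Proposition \ref{ConsLaw} and Proposition \ref{Stri}, $\psi_j \in L^{q_0}(I;W^{1,r_0}_x)$ on every bounded $I\subset \R$, so I only have to handle the tails $[T,+\infty)$ and $(-\infty,-T]$. For the positive tail I run a bootstrap: given $\eta>0$, Theorem \ref{decay} gives $T_\eta$ with $\|\psi_j(t)\|_{L^{r_0}_x}<\eta$ for $t\ge T_\eta$ and all $j$, and interpolation against the conserved $\Hin^1_x$-bound together with Sobolev embedding delivers $\|\psi_j(t)\|_{L^s_x} \le C\eta^{\theta(s)}$ for $s$ in a full neighborhood of $r_0$. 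Applying Proposition \ref{Stri} on $[T_\eta,T]$ with $(\widetilde q,\widetilde r)=(q_0,r_0)$ and placing the non-local nonlinearity $G$ in $L^{q_0'}_t W^{1,r_0'}_x$ through Hardy--Littlewood--Sobolev on the kernels $|x|^{-(d-\gamma_i)}$ and H\"older on the resulting product, I arrange that one factor is an $L^\infty_t L^s_x$-norm of size $O(\eta^{\theta(s)})$ while all remaining factors are either a power (strictly larger than one) of $\|\psi_j\|_{L^{q_0}([T_\eta,T];W^{1,r_0}_x)}$ or a conserved $\Hin^1_x$-type norm. This yields
\[
\sum_{j=1}^N\|\psi_j\|_{L^{q_0}([T_\eta,T];W^{1,r_0}_x)} \le C_0 + C\eta^\alpha\Big(\sum_{j=1}^N\|\psi_j\|_{L^{q_0}([T_\eta,T];W^{1,r_0}_x)}\Big)^\beta
\]
with $\alpha,\beta>0$, which closes by a standard continuity-in-$T$ argument once $\eta$ is chosen small enough.

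The symmetric reasoning on $(-\infty,-T_\eta]$, spliced with the local bound on $[-T_\eta,T_\eta]$, gives $\psi_j \in L^{q_0}(\R;W^{1,r_0}_x)$. The remaining admissible pairs follow from one further application of Proposition \ref{Stri} with $(\widetilde q,\widetilde r)=(q_0,r_0)$, dualizing $G$ to $L^{q_0'}_t W^{1,r_0'}_x$ by the same HLS + H\"older template, and using the just-proved global bound together with the conservation laws \eqref{eq:macons}--\eqref{eq:enecons} to control every factor. I expect the main obstacle to be the explicit exponent bookkeeping of this HLS + H\"older decomposition of $G$: the ranges \eqref{eq:base}--\eqref{eq:baseII} for $p>2$, and the narrower window $\gamma_1,\gamma_2<d-2$ in the Hartree--Fock regime $p=2$, are precisely what place the dual exponents in a range where one factor can be made small via decay while all others remain Strichartz- or conservation-controlled. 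Particular care will be needed for the exchange term inside $G_2$, since for $p=2$ there is no auxiliary $|\psi_j|^{p-2}$ available to be absorbed into the bootstrap exponent, and the structural positivity already exploited in Lemma \ref{le:intmorNL} will have to be used to compensate.
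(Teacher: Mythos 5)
Your overall strategy coincides with the paper's: single out one admissible pair, use the decay of Theorem \ref{decay} to produce a small factor, close a bootstrap of the form $X\le C_0+C\eta^{\alpha}X^{\beta}$ on a time tail by a continuity argument, and then transfer to all admissible pairs by one more application of Proposition \ref{Stri}. The genuine gap is that you never carry out the one estimate on which everything rests, namely that with your fixed pair $(q_0,r_0)=((2d+8)/d,(2d+8)/(d+2))$ the nonlinearity can be placed in $L^{q_0'}_tW^{1,r_0'}_x$ with exactly one small factor and all remaining factors Strichartz- or conservation-controlled; you explicitly defer this ``exponent bookkeeping'', but that bookkeeping \emph{is} the proof. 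The paper avoids the difficulty by adapting the pair to the convolution kernel: for $p>2$ it takes $(q_1,r_1)=\bigl(4p/(dp-d-\gamma_1),\,2dp/(d+\gamma_1)\bigr)$, for which Hardy--Littlewood--Sobolev and H\"older close \emph{diagonally}, i.e. $\bigl\|\nabla^{\kappa}\bigl(\squad{|x|^{-(d-\gamma_1)}*|\psi_k|^p}|\psi_j|^{p-2}\psi_j\bigr)\bigr\|_{L^{r_1'}_x}\lesssim \|u\|_{\Win^{1,r_1}_x}\|u\|_{\Lin^{r_1}_x}^{2p-2}$, and then a H\"older split in time with exponent $\theta_1=(q_1-q_1')/(2pq_1'-2q_1')$ isolates the factor $\|u\|_{L^{\infty}_{t>T}\Lin^{r_1}_x}^{(2p-2)(1-\theta_1)}$, which tends to zero as $T\to\infty$ by Theorem \ref{decay}; the case $p=2$ uses the analogous pair $(q_2,r_2)=(8/(d-\gamma_1),4d/(d+\gamma_1))$. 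With your $r_0$ fixed independently of $p$ and $\gamma_1$, the HLS/H\"older exponents will in general not return you to $L^{r_0'}_x$, so you would need auxiliary spatial exponents, Sobolev embeddings, and a more delicate time H\"older; you give no argument that this can be made to work uniformly over the full range \eqref{eq:base}--\eqref{eq:baseII}.

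A smaller point: your closing remark that the exchange term in the Hartree--Fock case requires the ``structural positivity'' exploited in Lemma \ref{le:intmorNL} is off the mark. In the Strichartz estimate of the nonlinearity the exchange term $\squad{|x|^{-(d-\gamma_2)}*\bar\psi_k\psi_j}\psi_k$ is estimated in absolute value by exactly the same HLS plus H\"older template as the direct term, yielding the same bound $\|u\|_{\Win^{1,r_2}_x}\|u\|_{\Lin^{r_2}_x}^{2}$; no sign information is used or needed at this stage.
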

\begin{proof}
We consider the integral operator associated to \eqref{eq:nls}, that is
\begin{align}\label{eq:opintCH}
u(t)=e^ {it  \Delta_x}u_0 +  \int_{0}^{t} e^ {i(t-\tau)\Delta_x} \Gamma(u(\tau),p) d\tau,
\end{align}
where $t>0$ and 
\begin{gather*}
u(t)=\begin{pmatrix}
      \psi_1(t)\\
      \vdots\\
      \psi_N(t)
      \end{pmatrix}, \quad
u_{0}= \begin{pmatrix}
     \psi_{1,0}\\
      \vdots\\
     \psi_{N,0}
     \end{pmatrix}, \\
   \mathcal G(u,p)=
\begin{pmatrix}  G(\psi_1, \psi_k)\\
\vdots
\\ G(\psi_N, \psi_k) \end{pmatrix}.
\end{gather*}
We start by dealing with $p>2$ and choose 
$( q_1', r_1')$ so that 
\begin{equation}\label{eq:pairCH1}
 ( q_1,r_1):= \left(\frac{4p}{dp-d-\gamma_1},\frac{2dp}{d+\gamma_1}\right).
\end{equation}
In this way the Strichartz estimates \eqref{eq:200p1}, the fractional chain rule, the H\"older and Hardy-Littlewood-Sobolev inequalities enhance, for $\kappa=0,1$, to the following (see \cite{MXZ})

\begin{align}\label{eq.1nonl}
\sum_{\kappa=1}^2\norm{\nabla^{\kappa}\psi_j}{L_t^{q_1}L_x^{r_1}}\\
\lesssim\sum_{\kappa=1}^2 \norm{\nabla^{\kappa}\psi_{j,0}}{L_x^{2}} +\sum_{\kappa=1}^2\left\|\sum_{k=1}^N \lambda_{jk}\nabla^{\kappa}\round{ \squad{|x|^{-(d-\gamma_1)}*| \psi_k|^p} | \psi_j|^{p-2}  \psi_j}\right\|_{L^{q_1'}_{t>T}L^{r_1'}_x}
\nonumber\\
\lesssim \sum_{\kappa=1}^2 \norm{\nabla^{\kappa}\psi_{j,0}}{L_x^{2}}+\sum_{\kappa=1}^2\norm{ \sum^N_{k=1}\lambda_{jk}\|\nabla^{\kappa}\psi_j\|_{L^{r_1}_x}\|\psi_k\|_{L_x^{r_1}}^{p} \|\psi_j\|_{L_x^{r_1}}^{p-2}}{L^{q_1'}_{t>T}}
\nonumber\\
+\left\|\sum_{k=1}^N \lambda_{jk}  \squad{|x|^{-(d-\gamma_1)}*\nabla| \psi_k|^p} | \psi_j|^{p-2}  \psi_j\right\|_{L^{q_1'}_{t>T}L^{r_1'}_x}
\nonumber\\
\lesssim \sum_{\kappa=1}^2 \norm{\nabla^{\kappa}\psi_{j,0}}{L_x^{2}}+\sum_{\kappa=1}^2\norm{ \sum^N_{k=1}\lambda_{jk}\|\nabla^{\kappa}\psi_j\|_{L^{r_1}_x}\|\psi_k\|_{L_x^{r_1}}^{p} \|\psi_j\|_{L_x^{r_1}}^{p-2}}{L^{q_1'}_{t>T}}
\nonumber\\
+\left\|\sum_{k=1}^N \lambda_{jk} \|\nabla\psi_k\|_{L_x^{r_1}}\|\psi_k\|_{L_x^{r_1}}^{p-1}\|\psi_j\|_{L_x^{r_1}}^{p-1}\right\|_{L^{q_1'}_{t>T}}.
\nonumber
\end{align} 
 Summing up over $j=1\dots N$, we see that the last term of the previous inequality is not greater than
  \begin{equation}\label{eq.1non2fNL}
 \begin{split}
   \sum^N_{j=1}\sum_{\kappa=1}^2 \norm{\nabla^{\kappa}\psi_{j,0}}{L_x^{2}}+\sum^N_{j,k=1}\sum_{\kappa=1}^2\norm{\lambda_{jk}\|\nabla^{\kappa}\psi_j\|_{L^{r_1}_x}\|\psi_k\|_{L_x^{r_1}}^{p} \|\psi_j\|_{L_x^{r_1}}^{p-2}}{L^{q_1'}_{t>T}}
\\
+\sum^N_{j,k=1}\left\| \lambda_{jk} \|\nabla\psi_k\|_{L_x^{r_1}}\|\psi_k\|_{L_x^{r_1}}^{p-1}\|\psi_j\|_{L_x^{r_1}}^{p-1}\right\|_{L^{q_1'}_{t>T}L^{r_1'}_x}.
\\
 \lesssim \|u_0\|_{\Hin^1_x} +\sum_{\kappa=1}^2\Big\|
    \|\nabla^{\kappa}u\|_{\Lin^{r_1}_x}
   \|u\|_{\Lin_x^{r_1}}^{(2p-2)}\Big\|_{L^{q'_1}_{t>T}}&
\end{split}
\end{equation}
We single out now $\theta_1=(q_1-q_1')/(2pq_1'-2q'_1)\in(0,1),$ because of \eqref{eq:base} and \eqref{eq:baseII}. Furthermore, direct calculations show that 
$$
\frac 1{q_1'}=\frac{2(p-1)\theta_1+1}{q_1},
$$
yielding for the last term  on the r.h.s. of
\eqref{eq.1non2fNL},
 \begin{equation}\label{eq.1non2s}
 \begin{split}
 \|u_0\|_{\Hin^1_x} +\sum_{\kappa=1}^2\Big\|
    \|\nabla^{\kappa}u\|_{\Lin^{r_1}_x}
   \|u\|_{\Lin_x^{r_1}}^{(2p-2)}\Big\|_{L^{q'_1}_{t>T}}
   \\
   \lesssim
   \|u_0\|_{\Hin^1_x}+ \Big\|  \|\nabla^{\kappa}u\|_{\Lin^{r_1}_x}
   \|u\|_{\Lin_x^{r_1}}^{(2p-2)(1-\theta_1)}\|u\|_{\Lin_x^{r_1}}^{(2p-2)\theta_1}\Big\|_{L^{q'_1}_{t>T}}&
   \\
    \lesssim 
   \|u_0\|_{\Hin^1_x}+ \Big\|  \|u\|_{\Win^{1,r_1}_x} 
   \|u\|_{\Lin_x^{r_1}}^{(2p-2)(1-\theta_1)}\|u\|_{\Lin_x^{r_1}}^{(2p-2)\theta_1}\Big\|_{L^{q'_1}_{t>T}}&
  \\
   \lesssim 
   \|u_0\|_{\Hin^1_x}+ \Big\|  
   \|u\|_{\Lin_x^{r_1}}^{(2p-2)(1-\theta_1)}\|u\|_{\Win^{1,r_1}_x}^{(2p-2)\theta_1+1}\Big\|_{L^{q'_1}_{t>T}}&
   \\
   \leq C\round{  \|u_0\|_{\Hin^1_x}+ \|u\|_{L^{\infty}_{t>T}\Lin_x^{r_1}}^{(2p-2)(1-\theta_1)}
  \|u\|_{L^{q_1}_{t>T}\Win^{1,r_1}_x}^{(2p-2)\theta_1+1}}&,
\end{split}
\end{equation}
with the constant $C>0$ independent from $t$ and $T$. An use of  \eqref{eq.1nonl}, \eqref{eq.1non2fNL},  \eqref{eq.1non2s}
leads to
\begin{align*}
\|u\|_{L^{q_1}_{t>T} \Win^{1,r_1}_x}
\lesssim\|u_0\|_{\Hin^1_x}+ \|u\|_{L^{\infty}_{t>T}\Lin_x^{r_1}}^{(2p-2)(1-\theta_1)}\|u\|_{L^{q_1}_{t>T}\Win^{1,r_1}_x}^{(2p-2)\theta_1+1},
\end{align*}
where 
$$
\lim _{T\rightarrow +\infty}\|u\|_{L^{\infty}_{t>T}\Lin_x^{r_1}}=0
$$
by \eqref{eq:decay1} in Theorem \ref{decay}. Then, picking up $T$ sufficiently large we infer that
$$
\|u\|_{L^{q_1}((T,t),\Win^{1,r_1}_x)}<\infty ,
$$
and consequently that $u\in L^{q_1}((T,+\infty), \Win^{1,r_1}_x).$ Likewise, we can earn $u\in L^{q_1}((-\infty, -T), \Win^{1,r_1}_x).$
In conclusion, by a continuity argument and Strichartz estimates \eqref{StrTV}, one has  $u\in L^q(\R, \Win^{1,r}_x)$
for any Schr\"odinger-admissible pair $(q,r)$.\\
Let us manage $p=2$. We pick $( q_2', r_2')$ defined by
\begin{equation}\label{eq:pairCH2}
 ( q_2,r_2):= \left(\frac{8}{d-\gamma_1},\frac{4d}{d+\gamma_1}\right),
\end{equation}
then we get, analogously as above, 
\begin{equation}\label{eq.1nonl2}
\begin{split}
 \sum^N_{j=1}\sum_{\kappa=1}^2\norm{\nabla^{\kappa}\psi_j}{L_t^{q_2}L_x^{r_2}}\lesssim   \sum^N_{j=1}\sum_{\kappa=1}^2 \norm{\nabla^{\kappa}\psi_{j,0}}{L_x^{2}}&
\\
+  \sum^N_{j,k=1}\sum_{\kappa=1}^2\left\| \nabla^{\kappa}\round{\lambda_{jk} \squad{|x|^{-(d-\gamma_1)}*| \psi_k|^2}   \psi_j}\right\|_{L^{q_2'}_{t>T}L^{r_2'}_x}&
\\
-\beta \sum^N_{j,k=1}\sum_{\kappa=1}^2\left\| \nabla^{\kappa}\round{ \squad{|x|^{-(d-\gamma_2)}*| \psi_k|^2}    \psi_j-\squad{|x|^{-(d-\gamma_2)}*\bar \psi_k \psi_j  } \psi_k}\right\|_{L^{q_2'}_{t>T}L^{r_2'}_x}&
\\
\lesssim \|u_0\|_{\Hin^1_x} +\sum_{\kappa=1}^2\Big\|
    \|\nabla^{\kappa}u\|_{\Lin^{r_2}_x}
   \|u\|_{\Lin_x^{r_2}}^{2}\Big\|_{L^{q'_2}_{t>T}}&
\\
   \lesssim 
   \|u_0\|_{\Hin^1_x}+ \Big\|  
   \|u\|_{\Lin_x^{r_2}}^{2(1-\theta_2)}\|u\|_{\Win^{1,r_2}_x}^{2\theta_2+1}\Big\|_{L^{q'_2}_{t>T}}&
   \\
   \lesssim  \|u_0\|_{\Hin^1_x}+ \|u\|_{L^{\infty}_{t>T}\Lin_x^{r_2}}^{2(1-\theta_2)}
  \|u\|_{L^{q_2}_{t>T}\Win^{1,r_2}_x}^{2\theta_2+1},&
\end{split}
\end{equation} 
where $\theta_2=(q_2-q_2')/2q_2'\in(0,1)$ and such that
$$
\frac 1{q_2'}=\frac{2\theta_1+1}{q_2}.
$$
This enables us to rewrite \eqref{eq.1nonl2} as
\begin{align*}
\|u\|_{L^{q_2}_{t>T} \Win^{1,r_2}_x}
\lesssim\|u_0\|_{\Hin^1_x}+ \|u\|_{L^{\infty}_{t>T}\Lin_x^{r_2}}^{2(1-\theta_2)}\|u\|_{L^{q_2}_{t>T}\Win^{1,r_2}_x}^{2\theta_2+1},
\end{align*}
with 
$$
\lim _{T\rightarrow +\infty}\|u\|_{L^{\infty}_{t>T}\Lin_x^{r_2}}=0
$$
again by \eqref{eq:decay1} in Theorem \ref{decay}. Thus one argues as in the previous lines
carrying out again that $u\in L^q(\R, \Win^{1,r}_x)$
for any admissible pair $(q,r)$.
\end{proof}

\begin{proof}[Proof of Theorem \ref{thm:mainNLCH}]
 We exploit the proof of Theorem \ref{thm:mainNLCH} for $p>2$ and $p=2$ in a unified manner. We start from:
   \\
  {\em Asymptotic completeness:} We write $\widetilde  u(t)=e^ {-it \Delta_{x}}u(t)$ getting then from \eqref{eq:opintCH} 
  \begin{align*}
\widetilde  u(t_2)-\widetilde  u(t_1)=  i\int_{t_1}^{t_2} e^ {-is \Delta_{x}}  \mathcal G(u,p)ds.
\end{align*}
An use of the Strichartz estimates \eqref{StrTV} bears to
 \begin{align}\label{eq:stric1}
\norm{\int_{t_1}^{t_2} e^ {-is \Delta_{x}}  \mathcal G(u,p)ds}{\Hin^1_x}
\nonumber\\
\lesssim 
\sum_{j,k=1}^N\lambda_{jk}\left\|\squad{|x|^{-(d-\gamma_1)}*| \psi_k|^p} | \psi_j|^{p-2}  \psi_j\right\|_{L^{q_1'}((t_1,t_2),W^{r_1'}_x)}
\\
+\beta\sum_{j,k=1}^N\left\| \squad{|x|^{-(d-\gamma_2)}*| \psi_k|^2}    \psi_j
-\squad{|x|^{-(d-\gamma_2)}*\bar \psi_k \psi_j  } \psi_k\right\|_{L^{q_2'}((t_1,t_2), W^{r_2'}_x)},
\nonumber
\end{align}
with $(q_1, r_1)$ and $(q_2, r_2)$ admissible pairs as in \eqref{eq:pairCH1} and \eqref{eq:pairCH2}. 
Then it suffices to display that
 \begin{align*}
\lim_{t_1, t_2\rightarrow \infty}\|\widetilde  u(t_2)-\widetilde  u(t_1)\|_{\Hin^1_x}=0,
\end{align*}
which is verified by \eqref{eq:stric1} on condition that  
 \begin{align*}
\lim_{t_1, t_2\rightarrow \infty}\sum_{j,k=1}^N\lambda_{jk}\left\|\squad{|x|^{-(d-\gamma_1)}*| \psi_k|^p} | \psi_j|^{p-2}  \psi_j\right\|_{L^{q_1'}((t_1,t_2), W^{r_1'}_x)}=0
\\
\lim_{t_1, t_2\rightarrow \infty}\sum_{j,k=1}^N\left\| \squad{\round{\lambda_{jk}|x|^{-(d-\gamma_1)}+\beta|x|^{-(d-\gamma_2)}}*| \psi_k|^2}    \psi_j\right\|_{L^{q_2'}((t_1,t_2), W^{r_2'}_x)}=0
\nonumber\\
\beta\lim_{t_1, t_2\rightarrow \infty}\sum_{\substack{j,k=1}}^N\left\|  \squad{|x|^{-(d-\gamma_2)}*\bar \psi_k \psi_j } \psi_k\right\|_{L^{q_2'}((t_1,t_2), W^{r_2'}_x)}=0,
\nonumber
\end{align*}
which can be easily performed following the same lines of the proof of Lemma \ref{StriNLSys}.
One can see, as a final step, that there are $(\psi_{1,0}^{\pm},\dotsc, \psi_{N,0}^\pm)\in H^1(\R^d)^N$ and a map
 $(\psi_1(t),\dotsc, \psi_N(t))\rightarrow (\psi_{1,0}^{\pm},\dotsc, \psi_{N,0}^\pm)$ in $H^1(\R^d)^N$ when $t\rightarrow\pm \infty.$
Notice that, by Proposition \ref{ConsLaw}, we establish also  the following conservation laws
\begin{equation*}
\begin{split}
M(\psi_{1,0}^{\pm},\dotsc,\psi_{N,0}^\pm)=\|(\psi_{1,0},\dotsc, \psi_{N,0})\|^2_{\Lin^2_x}, \\ 
 \sum_{j=1}^N  \int_{\R^d} \Big(\abs{\Delta{\psi_{j,0}^{\pm}}}^2+\kappa|\nabla \psi_{j,0}^{\pm}|\Big) dx=E(\psi_{1,0},\dotsc, \psi_{N,0}).
\end{split}
\end{equation*}

  {\em Existence of wave operators:} The construction of the wave operators comes from standard arguments, we refer to \cite{Ca} for more details about the matter. Then we skip the proof.
 \end{proof}

\begin{remark}
Once \eqref{eq:decay1} is achieved in the range $2<r< 2d/(d-2)$, we were able to  set up the scattering operator in $H^1(\R^d)^N$, as we did in the previous section. Now, similarly  \cite{Vis}, we arrive by
Sobolev embedding at
 \begin{align}\label{eq:stric2V}
\norm{\psi_j(t)}{L_x^\frac{2d}{d-2}}
\lesssim
\left\|\psi_j(t)-e^{it\Delta}\psi_{j,0}^\pm\right\|_{H_x^1}+\norm{e^{it\Delta}\psi_{j,0}^\pm}{L_x^\frac{2d}{d-2}}.
\end{align}
Now the above estimate \eqref{eq:stric2V} combined with the classical dispersive estimate for the free propagator
\begin{align*}
\norm{e^{it\Delta}\psi_{j,0}^\pm}{L_x^\frac{2d}{d-2}}
\lesssim
\frac 1 t \norm{\psi_{j,0}^\pm}{L_x^\frac{2d}{d+2}},
\end{align*}
again the Sobolev-embedding and \eqref{eq:scattering}, allow also to 
 \begin{align*}\label{eq:stric2V}
\lim_{t\rightarrow \infty}\norm{\psi_j(t)}{L_x^\frac{2d}{d-2}}=0.
\end{align*}
The proof of Theorem \ref{decay} is now completed.
\end{remark}

\appendix
\section{A Gagliardo-Nirenberg inequality}\label{appen}
The principal target of this section is to exhibit \eqref{eq:GNloc3} that is a localized version of the Gagliardo-Nirenberg inequality which appears in the proof of
Proposition \ref{decay}. Although it is known so far
in the literature in different forms (let us cite here \cite{CT}, \cite{Vis}, \cite{L1} or \cite{TeTzVi} in the context of product space $\R^d\times M,$ with $M$ a compact manifold), we show here a more general new one. We have:
\begin{proposition}\label{GNloc}
Let be $d\geq 1$, $\mu \in \N$ and $\nu \in \N\cup \{0\}$, then for all vector-valued functions $\phi=(\phi_\ell)_{\ell=1}^\mu \in H^1(\R^d)^\mu$ one gets the following 
\begin{equation}\label{eq:GNlocN}
\|\phi\|_{L^{\frac{2d+2\nu+4}{d+\nu}}(\R^d)^\mu}^{\frac{2d+2\nu+4}{d+\nu}}\leq C \left(\sup_{x\in \R^d} \|\phi\|_{ L^2(Q^d_{\widetilde x}(r))^\mu}\right)^{\frac{4}{d+\nu}} 
\|\phi\|^2_{ H^1(\R^d)^\mu},
\end{equation}
with $\mathcal Q_{\widetilde x}^d(r)=\widetilde x+[-r,r]^d\,$ being a $r$ dilation of the unit cube centered at $\widetilde x$.
\end{proposition}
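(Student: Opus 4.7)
My plan is to derive Proposition \ref{GNloc} by combining a local Gagliardo-Nirenberg estimate on each cube of a tiling of $\R^d$ with a summation argument that exploits the disjointness of the tiling. For the vector-valued case I will reduce to the scalar one by working with the pointwise modulus $|\phi|=\bigl(\sum_{\ell=1}^\mu|\phi_\ell|^2\bigr)^{1/2}$ and invoking Kato's inequality $|\nabla|\phi||\leq |\nabla \phi|$, so that the whole argument is carried out for a single scalar function $\phi$.

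Set $q:=(2d+2\nu+4)/(d+\nu)$ and notice that $q-2=4/(d+\nu)$. Tile $\R^d$ by a countable family $\{Q_k\}$ of pairwise disjoint cubes, each of them a translate of $[-r,r]^d$. On any such cube $Q$ I apply H\"older's inequality with the conjugate exponents $p_1=(d+\nu)/2$ and $p_2=(d+\nu)/(d+\nu-2)$ in the splitting $|\phi|^q=|\phi|^{4/(d+\nu)}\cdot|\phi|^2$, obtaining
\begin{equation*}
\int_Q|\phi|^q\,dx\leq \|\phi\|_{L^2(Q)}^{4/(d+\nu)}\,\|\phi\|_{L^{2p_2}(Q)}^2.
\end{equation*}
Since $2p_2=2(d+\nu)/(d+\nu-2)\leq 2d/(d-2)$ whenever $d\geq 3$ and $\nu\geq 0$ (with the obvious amendment in $d=1,2$, where $H^1\hookrightarrow L^s$ is available for any admissible $s$), the Sobolev embedding $H^1(Q)\hookrightarrow L^{2p_2}(Q)$ yields
\begin{equation*}
\|\phi\|_{L^q(Q)}^q\leq C\,\|\phi\|_{L^2(Q)}^{4/(d+\nu)}\,\|\phi\|_{H^1(Q)}^2,
\end{equation*}
with $C$ depending only on $d,\nu,r$ and, crucially, uniform over all translates of $[-r,r]^d$.

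Summing this local bound over the tiling, pulling the sup out of the $L^2$ factor and using disjointness to identify $\sum_k\|\phi\|_{H^1(Q_k)}^2=\|\phi\|_{H^1(\R^d)}^2$, I arrive at
\begin{equation*}
\|\phi\|_{L^q(\R^d)}^q\leq C\,\Bigl(\sup_k\|\phi\|_{L^2(Q_k)}\Bigr)^{4/(d+\nu)}\|\phi\|_{H^1(\R^d)}^2,
\end{equation*}
and since the supremum over the tiling cubes is clearly dominated by $\sup_{\widetilde x\in\R^d}\|\phi\|_{L^2(Q^d_{\widetilde x}(r))}$, the desired estimate \eqref{eq:GNlocN} follows.

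The only real technical obstacle is the uniformity of the Sobolev constant across all translated cubes of fixed side length; this is routine, since one can either invoke a standard extension operator $H^1(Q)\to H^1(\R^d)$ with norm depending only on the shape of $Q$, or equivalently rescale each $Q_k$ to the reference cube $[-r,r]^d$ for which the embedding constant is fixed. Low-dimensional cases require replacing the critical embedding by $H^1(Q)\hookrightarrow L^{2p_2}(Q)$ for finite $2p_2$ (or $L^\infty$ when $d=1$), which needs no further modification of the argument.
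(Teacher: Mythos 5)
Your strategy is the same as the paper's: tile $\R^d$ by essentially disjoint translates of $[-r,r]^d$, establish the local bound $\|\phi\|_{L^q(Q)}^q\lesssim\|\phi\|_{L^2(Q)}^{4/(d+\nu)}\|\phi\|_{H^1(Q)}^2$ with $q=2+\tfrac{4}{d+\nu}$ and a translation-invariant constant, then sum over the tiling and pull the $L^2$ factor out as a supremum; the reduction to a scalar function via $|\phi|$ and Kato's inequality and the final summation are fine. The only difference lies in how the local bound is derived: the paper applies the classical Gagliardo--Nirenberg interpolation on the cube with an auxiliary exponent $\rho\leq2$ and then H\"older to pass from $L^\rho$ to $L^2$, whereas you use the splitting $|\phi|^q=|\phi|^{4/(d+\nu)}\cdot|\phi|^2$, H\"older with exponents $p_1=\tfrac{d+\nu}{2}$, $p_2=\tfrac{d+\nu}{d+\nu-2}$, and the embedding $H^1(Q)\hookrightarrow L^{2p_2}(Q)$. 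Whenever $d\geq 3$, and more generally whenever $d+\nu\geq 3$ or $(d,\nu)=(1,1)$, this is correct (one checks $2p_2\leq\tfrac{2d}{d-2}$ for $d\geq3$) and arguably cleaner than the paper's route.

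There is, however, a genuine gap in the two remaining cases allowed by the statement, and the ``obvious amendment in $d=1,2$'' does not close it. For $(d,\nu)=(1,0)$ your H\"older exponent is $p_1=\tfrac12<1$, so the splitting is not a legitimate application of H\"older at all; for $(d,\nu)=(2,0)$ you get $2p_2=\infty$ and would need $H^1(Q)\hookrightarrow L^\infty(Q)$, which is false in two dimensions. In both cases the local inequality you need is still true --- for $(2,0)$ it is the Ladyzhenskaya estimate $\|\phi\|_{L^4(Q)}^4\lesssim\|\phi\|_{L^2(Q)}^2\|\phi\|_{H^1(Q)}^2$, and for $(1,0)$ it follows from the one-dimensional Agmon bound $\|\phi\|_{L^\infty(Q)}^2\lesssim\|\phi\|_{L^2(Q)}\|\phi\|_{H^1(Q)}$ --- but these are genuine interpolation inequalities that use the gradient, not consequences of H\"older plus a Sobolev embedding. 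This is precisely what the paper's appeal to the classical Gagliardo--Nirenberg inequality on the cube supplies uniformly in $\nu\geq0$. Since the proposition is stated for all $d\geq1$ and $\nu\in\N\cup\{0\}$ (even though the application only invokes $\nu=2$), you should either restrict your derivation to the cases where your exponents are admissible and the embedding is subcritical, or replace the H\"older-plus-embedding step by the interpolation inequality in the cases $(d,\nu)=(1,0),(2,0)$.
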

\begin{proof}
Fix $r>0$ and consider $\widetilde x_s\in\R^d$  connected to a covering of $\R^d$ given by a family of cubes $\{\mathcal Q_{\widetilde x_s}^d(r)\}_{s\in\N}$ such that $meas_d\round{ \Q_{\widetilde x_{s_1}}^d(r)\cup \Q_{\widetilde x_{s_2}}^d(r)}=0$ for $s_1\neq s_2$, where $meas_d$ is the Lebesgue measure in $\R^d$. Without loss of generality, we can take $\phi=(\phi_\ell)_{\ell=1}^\mu$, such that $\supp (\phi_\ell) \subseteq \Q_{\widetilde x_s}^d(r)$ for any $ \ell=1,\dots,\mu$, then by the classical Gagliardo-Nirenberg inequality we 
attain

\begin{align}\label{eq:GNlocN2}
\sum_{\ell=1}^\mu\int_{\Q_{\widetilde x_s}^d(r)}|\phi_\ell|^{\frac{2d+2\nu+4}{d+\nu}}\lesssim \sum_{\ell=1}^\mu\left(\int_{\Q_{\widetilde x_s}^d(r)}|\phi_\ell|^{\rho}\right)^{
\frac 4{\rho(d+\nu)}} 
\left(\int_{\Q_{\widetilde x_s}^d(r)}|\nabla\phi_\ell|^{\rho}\right)^{\frac{2}{\rho}}&
\end{align}
and
$$
\rho=\frac{2d(d+\nu+2)}{(d+2)(d+\nu)}>2.
$$
An application of the H\"older inequality, gives that the r.h.s. of \eqref{eq:GNlocN2} is bounded as

\begin{align}\label{eq:GNlocN2a}
 \sum_{\ell=1}^\mu\left(\int_{\Q_{\widetilde x_s}^d(\rho)}|\phi_\ell|^{\rho}\right)^{
\frac 4{\rho(d+\nu)}} 
\left(\int_{\Q_{\widetilde x_s}^d(r)}|\nabla\phi_\ell|^{\rho}\right)^{\frac{2}{\rho}}&
\\
\leq C\sum_{\ell=1}^\mu\left(\int_{\Q_{\widetilde x_s}^d(r)}|\phi_\ell|^{2}\right)^{
\frac 2{d+\nu}} 
\left(\int_{\Q_{\widetilde x_s}^d(r)}|\nabla\phi_\ell|^{2}\right)
\nonumber\\
\leq C \left(\sum_{\ell=1}^\mu\|\phi_\ell\|_{ L^2(\Q_{\widetilde x_s}^d(r))}\right)^{
\frac 4{d+\nu}} 
\sum_{\ell=1}^\mu\|\phi_\ell\|^2_{H^1(\Q_{\widetilde x_s}^d(r))}.&
\nonumber
\end{align}
with $C>0$ a constant depending on $meas_d\round{\Q_{\widetilde x_s}^d(r)}$. From \eqref{eq:GNlocN2} and \eqref{eq:GNlocN2a}
one can get 
\begin{equation}\label{eq:GNlocN3}
\|\phi\|_{L^{\frac{2d+2\nu+4}{d+\nu}}(\Q_{\widetilde x_s}^d(r))^\mu}^{\frac{2d+2\nu+4}{d+\nu}}\leq C \left(\|\phi\|_{L^2(\Q_{\widetilde x_s}^d(r))^\mu}\right)^{\frac 4{d+\nu}} 
\|\phi\|^2_{H^1(\Q_{\widetilde x_s}^d(r))^\mu}.
\end{equation}
Hence summing over $s$ we obtain
 \begin{align}\label{eq:GNlocN4}
\|\phi\|_{L^{\frac{2d+2\nu+4}{d+\nu}}(\R^d)^\mu}^{\frac{2d+2\nu+4}{d+\nu}}\leq C \left(\sup_{s\in\N}\|\phi\|_{L^2(\Q_{\widetilde x_s}^d(r))^\mu}\right)^{\frac{4}{d}} 
\sum_{s\in\N}\|\phi\|^2_{H^1(\Q_{\widetilde x_s}^d(r))^\mu}&
\nonumber\\
\leq C \left(\sup_{x\in\R^d}\|\phi\|_{L^2(\Q_{\widetilde x}^d(r))^\mu}\right)^{\frac 4{d+\nu}} 
\|\phi\|^2_{H^1(\R^d)^\mu},&
\end{align}
which is the estimate \eqref{eq:GNlocN}, with the constants involved  independent from $s$ because the estimate above is translation invariant.

\end{proof}


\begin{thebibliography}{CTCR81}

  
\bibitem{Ad} R. Adams, J. Fournier, \emph{Sobolev spaces}, 2nd ed., Academic Press, 2003.

\bibitem{Ar} A. Arora, \emph{Scattering of radial data in the focusing NLS and generalized Hartree equations},
arXiv:1904.05800 [math.AP].

\bibitem{ArRo} A. Arora, S. Roudenko, \emph{Global behavior of solutions to the focusing generalized Hartree Equation}, arXiv:1904.05339 [math.AP]. 

\bibitem{BJPSS} N. Benedikter, V. Jaksic, M. Porta, C. Saffirio, B. Schlein, \emph{Mean-field Evolution of Fermionic Mixed States}, Comm. Pure Appl. Math. 69 (2016), 2250-2303.

\bibitem{BSS2} N. Benedikter, J. Sok, and J. P. Solovej,  \emph{The Dirac-Frenkel Principle for Reduced
Density Matrices, and the Bogoliubov-de-Gennes Equations}, Annales Henri Poincar\'e 19(4) (2018), 1167-1214.


\bibitem{BAGS} C. Bonanno, P. d'Avenia, M. Ghimenti, M. Squassina, \emph{Soliton dynamics for the generalized Choquard equation}, J. Math. Anal. Appl., 417 (2014), 180-199.


\bibitem{CT} B. Cassano, M. Tarulli, 
\emph{$H^1$-scattering for Systems of $N$-defocusing Weakly Coupled NLS Equations in Low Space Dimensions}. J. Math. Anal. Appl. 430 (2015) 528-548.

\bibitem{CLM} R. Carles, W. Lucha, E. Moulay, \emph{Higher order Schr\"odinger and Hartree-Fock equations}. Journal of Mathematical Physics, American Institute of Physics (AIP), 2015, 56 (12), pp.122-301.

\bibitem{Ca} T. Cazenave, \emph{Semilinear Schr\"odinger equations}, Courant Lecture Notes in Mathematics, 10, New York University Courant Institute of Mathematical Sciences, New York, 2003.


\bibitem{ChS} P. Choquard, J. Stubbe, \emph{The one-dimensional Schr\"odinger-Newton equations}, Lett. Math. Phys. 81(2), (2007, 177-184)


\bibitem{CSV} P. Choquard, J. Stubbe, M Vuffray,  \emph{Stationary solutions of the Schr\"odinger- Newton model-an ODE approach}, Differ. Integral Equ. 21(7-8), (2008), 665-679. 

 
 \bibitem{ES} A. Elgart, B. Schlein, \emph{Mean field dynamics of boson stars}, Comm. Pure Appl. Math., 60(4), (2007), 500-545.

\bibitem{FeYU} B. Feng, X. Yuan. \emph{On the Cauchy problem for the Schr\"odinger-Hartree equation}, Evolution Equations \& Control Theory, 2015, 4 (4) : 431-445. doi: 10.3934/eect.2015.4.431.

\bibitem{Fo} V. A. Fock, {N\"aherungsmethode zur l\"osung des quantenmechanischen mehrk\"orperproblems}. Zeit. f\"ur Physik, 61(1-2), (1930), 126-148.

\bibitem{FrLe} J. Fr\"ohlich, E. Lenzmann, \emph{Dynamical collapse of white dwarfs in Hartree-and Hartree-Fock theory}, Commun. Math. Phys., 274, (2007), 737-750.

\bibitem{GV} H. Genev, G. Venkov, \emph{Soliton and blow-up solutions to the time-dependent Schr\"odinger-Hartree equation}, Discrete Contin. Dyn. Syst. Ser. S, 5 (2012), 903-923.


\bibitem{GS} V. Georgiev, A. Stefanov, \emph{On the classification of the spectrally stable standing waves of the Hartree problem}, Physica D: Nonlinear Phenomena, 370, (2018),  29-39.


\bibitem{GiOz} J. Ginibre, T. Ozawa, \emph{Long range scattering for nonlinear Schrödinger and Hartree equations in space dimension $n\geq 2$}, Comm. Math. Phys., 151, (1993), 619-645.

\bibitem{GiVel3} J. Ginibre, G. Velo, \emph{On a class of nonlinear Schr\"odinger equations with
nonlocal interaction}, Math. Z., 170(2), (1980), 109-136.

\bibitem{GiVel2} J. Ginibre, G. Velo, \emph{Scattering theory in the energy space for a class of Hartree equations}, Nonlinear wave equations, 263, (1998), 29-60.

\bibitem{GiVel} J. Ginibre, G. Velo. \emph{Quadratic morawetz inequalities and asymptotic completeness
in the energy space for nonlinear Schr\"odinger and Hartree equations}, Quart. Appl. Math. 68 (2010), 113-134 .


\bibitem{Len} E. Lenzmann, \emph{Well-posedness for semi-relativistic Hartree equations of critical type}, Math. Phys. Anal. Geom., 10(1), (2007), 43-64,.


\bibitem{HaOz} N. Hayashi, T. Ozawa,  \emph{Scattering theory in the weighted $L^2(\R^n)$ spaces for some Schr\"odinger-Hartree equations}, Ann. Inst. Henri Poincaré Phys. Th\'eor. 48, 17-37.

\bibitem{Lieb} E. Lieb, \emph{Existence and uniqueness of the minimizing solution of Choquard's nonlinear equation}, Studies in Appl. Math., 57 (1976), 93-105.

\bibitem{KT}
M. Keel, T. Tao, {\em Endpoint {S}trichartz estimates}.
120, 5 (1998), 955-980.

\bibitem{LR} M. Lewin, N. Rougerie, \emph{Derivation of Pekar's polarons from a microscopic model of quantum crystal}, SIAM J. Math. Anal., 45 (2013), 1267-1301.



\bibitem{Li} X. Li, \emph{Global existence and blowup for Choquard equations with an inverse-square potential}, 
eprint arXiv:1902.04739.

\bibitem{L1} P. L. Lions, 
\emph{The concentration-compactness principle in the calculus of variations. The locally compact case. part 1 and partm2},  Annales de l'institut Henri Poincar\'e (C) Analyse non lineaire, 1 no. 2 (1984), 09-145.


\bibitem{MXZ} C. Miao, G., Xu, L., Zhao, \emph{Global well-posedness and scattering
for the energy-critical, defocusing Hartree equation for radial data}, Journal of Functional Analysis, 253, (2007), 605-627 


\bibitem{MXZ2} C. Miao, G., Xu, L., Zhao, \emph{The Cauchy problem of the Hartree equation,} J. PDEs, 21, (2008), 22-44.

\bibitem{MXZ3} C. Miao, G., Xu, L., Zhao, \emph{On the blow-up phenomenon for the mass-critical focusing Hartree
equation in $\R^4$}, Colloq. Math., 119 (2010), 23-50.


\bibitem{MV} V. Moroz, J. V. Schaftingen, \emph{Groundstates of nonlinear Choquard equations: Existence,
qualitative properties and decay asymptotics}, J. Funct. Anal., 265 (2013), 153-184.


\bibitem{Nakanishi1} K. Nakanishi, \emph{Energy scattering for nonlinear Klein-Gordon and Schr\"odinger 
equations in spatial dimensions 1 and 2}. J. Fund. Anal. 169: 201-225, 1999.



\bibitem{NaOz} H. Nawa, T., Ozawa, \emph{Nonlinear scattering with nonlocal interaction.} Comm. Math. Phys., 1992, 146, 259-275

\bibitem{Pen} R. Penrose, \emph{Quantum computation, entanglement and state reduction}, Phil. Trans. R. Soc.,
356 (1998), 1927-1939.

\bibitem{PlVe} F. Planchon, L. Vega, \emph{Bilinear virial identities and applications}, Ann. Sci. \'Ec. Norm. Sup\'er. (4) 42 (2009), no. 2, 261-290.

\bibitem{Ta} M. Tarulli, \emph{$H^2$-scattering for Systems of weakly Coupled fourth-order NLS equations in low Space dimensions}, Potential Anal (2018). https://doi.org/10.1007/s11118-018-9712-8

\bibitem{TeTzVi} S. Terracini, N. Tzvetkov, N. Visciglia, \emph{The Nonlinear Schr\"odinger equation ground states on product spaces}, 2012, Analysis \& PDE, to appear.

\bibitem{TzVi} N. Tzvetkov, N. Visciglia, \emph{Well-posedness and scattering for NLS on $\R^d\times \T$ in the energy space}, Rev. Mat. Iberoam. 32 (2016), no. 4, 1163-1188.

\bibitem{Vis} N. Visciglia, \emph{On the decay of solutions to a class of defocusing}, NLS. Math. Res. Lett. 16 (2009), no. 5, 919-926.

\bibitem{Wa} T. Wada, \emph{Scattering theory for time-dependent Hartree-Fock type equation}, Osaka J. Math., 36 (1999)
905-918.


\end{thebibliography}
\end{document}